\def\ps@pprintTitle{%
 \let\@oddhead\@empty
 \let\@evenhead\@empty
 \def\@oddfoot{}%
 \let\@evenfoot\@oddfoot}
\newtheorem{theorem}{Theorem}
\newtheorem{proposition}[theorem]{Proposition}
\newtheorem{lemma}[theorem]{Lemma}
\newtheorem{remark}[theorem]{Remark}
\begin{document}
\title{Cross apprenticeship learning framework: Properties and solution approaches}
\affiliation[iitb]{organization={Systems and Control Engineering, Indian Institute of Technology Bombay},
    addressline={Powai}, , 
    state={Maharashtra},
    country={India}}
\affiliation[rug]{organization={Engineering \& Technology Institute Groningen, University of Groningen}, 
    city={Groningen},
    country={The Netherlands}}
\author[iitb]{Ashwin Aravind}
\ead{ashwin@sc.iitb.ac.in}
\author[iitb]{Debasish Chatterjee}
\ead{dchatter@iitb.ac.in}
\author[rug]{Ashish Cherukuri}
\ead{a.k.cherukuri@rug.nl}
\begin{abstract}
	Apprenticeship learning is a framework in which an agent learns a policy to perform a given task in an environment using example trajectories provided by an expert. In the real world, one might have access to expert trajectories in different environments where the system dynamics is different while the learning task is the same. For such scenarios, two types of learning objectives can be defined. One where the learned policy performs very well in one specific environment and another when it performs well across all environments. To balance these two objectives in a principled way, our work presents the cross apprenticeship learning (CAL) framework. This consists of an optimization problem where an optimal policy for each environment is sought while ensuring that all policies remain close to each other. This nearness is facilitated by one tuning parameter in the optimization problem. We derive properties of the optimizers of the problem as the tuning parameter varies. Since the problem is nonconvex, we provide a convex outer approximation. Finally, we demonstrate the attributes of our framework in the context of a navigation task in a windy gridworld environment.
\end{abstract}
\maketitle
\section{Introduction}
\label{sec:intro}
Reinforcement learning involves learning via interaction with the environment to perform a task optimally in a sequential decision-making process~\cite{RSS-AGB:18}. Commonly, the agent takes an action at a state, transitions to another state, obtains a reward from the environment and repeats the whole process again. Learning occurs when the agent looks to maximize the long-term reward, and so the efficacy of learning relies heavily on the reward structure. Poorly defined rewards lead to unwanted behaviour. In several control applications, defining appropriate rewards is difficult, and most likely, the desired behaviour can be demonstrated by an expert. For such cases, several methods under the broad umbrella of \emph{learning from demonstrations} are studied in the past, where the behaviour of an expert is available in terms of state-action trajectories~\cite{TO-JP-GN-JAB-PA-JP:18}. This information can be used in different ways, out of which, a common one is the framework of \emph{apprenticeship learning}~\cite{US-MB-RS:08}. Here, the goal is to recover optimal policies for a given Markov decision process (MDP) using demonstrations from the expert and the fact that the set where the reward function belongs to is known.

In real life, we envision scenarios where trajectories from multiple experts in different environments are available, but the underlying task is common across environments. In such settings, an agent in an environment can learn a policy that seeks a trade-off between its performance in its own environment and across multiple environments. The former is attractive when the agent is supposed to only operate in its own environment, and the available trajectories from its own expert are sufficient. The latter is advantageous in cases where the learned policy is supposed to work well across a range of environments and also possibly work as a warm start for specialized learning in any particular environment. Taking these considerations as the motivation, we present the \emph{cross apprenticeship learning} (CAL) framework in this work. We analyze the properties of the policies obtained from this framework and then address computational issues. 

\subsection*{Literature review}
Apprenticeship learning, as introduced in~\cite{PA-AN:04} consisted of two steps, first was to infer the reward function governing the expert's actions using inverse reinforcement learning, and the second was learning a suitable optimal policy for this reward function using reinforcement learning. Here, although the reward function was unknown to the agent, it was known to belong to the set of linear combinations of certain basis vectors. The applications of this framework are plenty, for example, to learn aerobatic manoeuvres on a helicopter~\cite{PA-AC-MQ-AN:06, AC-PA-AN:08}, quadrupled locomotion~\cite{JK-PA-AN:07}, navigation in a parking lot~\cite{PA-DD-AN-ST:08}, and automated parking~\cite{PF-ZY-LX-ZF-Zl-DZ:21}. In~\cite{US-RES:08} a game-theoretic approach for apprenticeship learning was proposed: the problem was cast as a two-player zero-sum game where the learning agent chooses the policy and the environment selects the reward function. This framework resulted in computationally inexpensive method and it found policies that are guaranteed to be at least as good as the expert policy for any given reward function. Building on~\cite{US-RES:08} and the linear programming (LP) approach for finding optimal policies given in~\cite{MLP:94}, the work~\cite{US-MB-RS:08} proposes an LP formulation for apprenticeship learning. The work~\cite{AK-GB-JL:19}, motivated by~\cite{YAY-PLB-XC-AM:19}, extended this LP framework to large-scale problems by solving an approximate problem where the decision variable is assumed to lie in a subspace generated by feature vectors. 

In our work, we use the LP framework for apprenticeship learning as a starting point. Our objective in this article differs from the above mentioned methods because we wish to learn a policy that is able to perform a task well in multiple environments by exploiting the availability of expert demonstrations in these environments. Such policies have a definite edge in terms of robustness as compared to policies that are learned in only one environment.

Closely related to our work are~\cite{PB-DS:19},~\cite{SB-PK-SM-CT-TZ:21} and~\cite{JC-SH-WJ-MC-SC-YS:22}. The work~\cite{PB-DS:19} aims to find a policy that performs well in different scenarios of an MDP where the scenarios are supposed to be representative of the change in the agent's environment. We note that the setting is not of learning from an expert. Instead, they assume that the reward function is given.  In~\cite{SB-PK-SM-CT-TZ:21}, expert demonstrations from different environments, parameterized by a context variable, are used to infer a parameterized reward function. The aim is to use the inferred function to perform learning in unseen environments.
The work~\cite{JC-SH-WJ-MC-SC-YS:22} explores a similar setup for imitation learning as ours. Here, minimization of the Jensen-Shannon divergence between the agent's policy and the experts' policies in different environments improved robustness to variations in environment dynamics compared to baseline imitation learning techniques. Unlike these methods, we use the LP-based approach to define cross-learning, where we borrow the key ideas of centrality of policies from~\cite{JC-JAB-MCF-AR:20} to find a middle ground between performance in one single environment and performance in all environments.

Apart from these works, there is a growing interest in inverse reinforcement or imitation learning for linear systems~\cite{HY-PS-MJ-MA:22}, nonlinear systems~\cite{ZZ-MB-NB:18, ST-AL-SH:21, ST-AR-TZ-NM:21} and MDPs~\cite{FM-AH-SN-UT:21, LR-ER:20, AK-GB-JL:21}. 
\subsection*{Setup and contributions}

For a single-agent single-environment case, the apprenticeship learning framework involves finding a policy that minimizes the worst-case discrepancy between the cost incurred by the said policy and an expert policy. In here, the worst-case discrepancy is computed by considering all cost functions that belong to a linear subspace spanned by a certain number of basis vectors. This is motivated by the setting where the learning agent does not have access to the actual cost function driving the expert behaviour but knows the set where it belongs to. This worst-case minimization problem can be cast as an LP in terms of the occupation measure. It is assumed that the learning agent does not have access to the policy of the expert. Instead, the occupation measure corresponding to the expert policy is available as it can be easily approximated using available expert trajectories. 

Our \emph{first contribution} proposes the CAL framework that extends the above defined single agent apprenticeship learning to multiple agents. At the core of this framework is the optimization problem where we seek a policy for each environment that balances two objectives. First, it minimizes the worst-case discrepancy measure, as explained above for a single agent case, for its own environment. Second, it aims to be in close proximity to policies associated to other environments. While the former is codified in the objective function of the CAL optimization problem, the latter appears as a linear constraint. The degree of proximity between policies is tuned by a parameter termed as the centrality measure. Our \emph{second contribution} is to present properties of the optimizers of the CAL problem as the centrality measure varies from low to high values. We show that when this parameter is low, all policies are close to each other and so the obtained optimizers have good \emph{generic performance}. That is, policies perform well across all environments. On the other hand, when the parameter value is high, each agent's policy maximizes performance in its own environment, that is, it displays good \emph{specific performance}. Since the CAL problem is nonconvex, our \emph{third contribution} is an outer convex approximation of the problem using McCormick envelopes.  We then discuss how this  approximation can be solved in a distributed manner. Our \emph{last contribution}  demonstrates the properties of the CAL framework in a numerical example where agents learn to navigate to a goal position in a windy gridworld. 

We organize the rest of our article as follows. Section~\ref{sec:pre} provides preliminaries. The CAL framework is presented in Section~\ref{sec:prb_stmt}. In Section~\ref{sec:performance}, we provide properties of the optimizers of the CAL optimization problem.  Section~\ref{sec:centralised} outlines a convex outer approximation of the CAL problem. Section~\ref{sec:simulation} demonstrates the use of the presented framework for a navigation task in a windy gridworld environment. 
\section{Preliminaries}\label{sec:pre}
Here we collect notations and background on perturbation analysis of optimization problems.
\subsection{Notations}\label{sec:notation}
We use $\R{}{}$ and $\R{}{\ge 0}$ to denote real and nonnegative real numbers, respectively. Unless otherwise specified, $\norm{\cdot}$ is $\norm{\cdot}_{2}$. By $e_k$ we represent a vector of dimension $\Tenv$ with all entries being 0 expect for the $k^{\text{th}}$ entry which is 1. A vector with all entries as unity is denoted by $\ones$.  A $k$-dimensional simplex is represented by $\simplex{k} = \setdef{x \in \R{k}{\ge 0}}{\ones^\top x = 1}$. For any positive integer $n$, we use the notation $\until{n}=\{1,2,\cdots,n\}$. The number of elements in a set $\State$ is denoted by $\abs{\State}$. Given two sets $X$ and $Y$, a set-valued map $f: X \rightrightarrows Y$ associates to each point in $X$ a subset of $Y$. The set-valued map $f$ is closed if its graph  $\operatorname{gph}(f):=\setdef{(x,y)\in X\times Y}{y \in f(x) }$ is closed. Furthermore, the set-valued map $f$ is upper semicontinuous at a point $x_0 \in X$ if for any neighborhood $\Neig{f(x_0)}$ of the set $f(x_0)$ there exists a neighborhood $\Neig{x_0}$ of $x_0$ such that for every $x \in \Neig{x_0}$ the inclusion $f(x) \subset \Neig{f(x_0)}$ holds. If this property holds for all $x_0 \in X$, then $f$ is said to be upper semicontinuous. 

\subsection{Perturbation of parameterized optimization problems }
\label{A:USC}
Consider the following problem:
\begin{equation}
\label{eq:par_opt}
\begin{aligned}
\min_{\dummyx} & \quad \dumff(\dummyx,\dummyu) \\
\subjectto & \quad  \dummyx\in \Dummyx,\\
 & \quad \dumfG(\dummyx,\dummyu) \leq  0,
\end{aligned}
\end{equation}
where $u$ is a parameter that belongs to a closed set $\dummyu \in \Dummyu \subset \R{n_u}{}$ and $\Dummyx \subset \R{n_x}{}$ is a closed set. The functions $\dumff:\Dummyx\times\Dummyu\to\R{}{}$ and  $\dumfG:\Dummyx\times\Dummyu\to\R{}{}$ are continuous. The feasibility set for the above optimization problem can be parameterized as the following set-valued map:
\begin{equation}
    \Dummyu \ni \dummyu \mapsto \HH(\dummyu):=\setdef{\dummyx\in \Dummyx}{\dumfG(\dummyx,\dummyu) \leq 0}.
\end{equation}
Similarly, the set of optimizers of~\eqref{eq:par_opt} is written as the following set-valued map: 
\begin{equation}
\label{eq:par_optmap}
    \Dummyu \ni \dummyu \mapsto \S(\dummyu):=\argmin_{\dummyx\in\HH(\dummyu)}\dumff(\dummyx,\dummyu).
\end{equation}
We are interested in the continuity of the map $\S:\U \rightrightarrows \Dummyx$ in the neighborhood of a point $\dummyu_0 \in \Dummyu$ (see Section~\ref{sec:pre}-\ref{sec:notation} for relevant definitions).
\begin{proposition}\longthmtitle{Upper semicontinuity of $\S$~\cite[Proposition 4.4]{JB-AS:00}}
\label{prop:usc}
Given $\dummyu_0 \in \Dummyu$, suppose the following hold:
\begin{enumerate} 
    \item the map $\HH(\cdot)$ is closed,
    \item there exists $\alpha\in\R{}{}$ and a compact set $C\subset\Dummyx$ such that for every $\dummyu$ in the neighborhood of $\dummyu_0$, the level set $\operatorname{lev}_{\alpha} \dumff(\cdot, \dummyu):=\setdef{\dummyx \in \HH(\dummyu)} {\dumff(\dummyx,\dummyu) \leq \alpha}$ is nonempty and contained in $C$, 
    \item for any neighborhood $\Neig{\S(\dummyu_0)} \subset \Dummyx$ of the set $\S(\dummyu_0)$, there exists a neighborhood $\Neig{\dummyu_0} \subset \Dummyu$ of $\dummyu_0$ such that $\Neig{\S(\dummyu_0)}  \cap \HH(\dummyu) \neq \emptyset$ for all $\dummyu \in \Neig{\dummyu_0}$.
\end{enumerate}
Then, the set-valued map $\dummyu\mapsto\S(\dummyu)$ is upper semicontinuous at $\dummyu_0$.
\end{proposition}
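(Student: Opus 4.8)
The plan is to argue by contradiction and to let each hypothesis play a separate role: conditions~(1)--(2) will supply existence and a uniform relative compactness of minimizers near $\dummyu_0$, while condition~(3) will control the optimal value from above. First I would verify that for every $\dummyu$ in a neighborhood of $\dummyu_0$ the problem~\eqref{eq:par_opt} has a minimizer and $\S(\dummyu)\subset C$. Since $\Dummyx$ is closed and $\dumfG$ is continuous, $\HH(\dummyu)$ is closed, hence the level set $\operatorname{lev}_{\alpha}\dumff(\cdot,\dummyu)$ is closed; by~(2) it is nonempty and contained in the compact set $C$, so it is compact and Weierstrass' theorem yields a minimizer. Since the level set is nonempty, the optimal value is at most $\alpha$, so every minimizer lies in the level set and hence in $C$; in particular $\S(\dummyu_0)$ is a nonempty compact subset of $C$.

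Next I would suppose $\S$ is not upper semicontinuous at $\dummyu_0$, producing an open neighborhood $V\supset\S(\dummyu_0)$, a sequence $\dummyu_k\to\dummyu_0$, and points $\dummyx_k\in\S(\dummyu_k)$ with $\dummyx_k\notin V$. By the first step $\dummyx_k\in C$ for large $k$, so a subsequence converges, $\dummyx_k\to\bar{\dummyx}\in C$; since $V$ is open and each $\dummyx_k$ lies in its complement, $\bar{\dummyx}\notin V$ and hence $\bar{\dummyx}\notin\S(\dummyu_0)$. As $\dummyx_k\in\HH(\dummyu_k)$ and $(\dummyu_k,\dummyx_k)\to(\dummyu_0,\bar{\dummyx})$, closedness of $\HH$ from~(1) gives $\bar{\dummyx}\in\HH(\dummyu_0)$, so $\bar{\dummyx}$ is feasible at $\dummyu_0$. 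It then suffices to show that $\bar{\dummyx}$ is optimal at $\dummyu_0$, which contradicts $\bar{\dummyx}\notin\S(\dummyu_0)$.

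For the optimality step, write $\vartheta(\dummyu):=\min_{\dummyx\in\HH(\dummyu)}\dumff(\dummyx,\dummyu)$ for the optimal value. Continuity of $\dumff$ and $\dummyx_k\in\S(\dummyu_k)$ give $\dumff(\bar{\dummyx},\dummyu_0)=\lim_k\dumff(\dummyx_k,\dummyu_k)=\lim_k\vartheta(\dummyu_k)$, so everything reduces to proving $\limsup_k\vartheta(\dummyu_k)\le\vartheta(\dummyu_0)$, i.e.\ upper semicontinuity of the value function; this is the step I expect to be the main obstacle, and it is exactly where condition~(3) enters. To carry it out I would fix $\varepsilon>0$ and use compactness of $\S(\dummyu_0)$ together with joint continuity of $\dumff$ (which equals $\vartheta(\dummyu_0)$ on $\S(\dummyu_0)\times\{\dummyu_0\}$) to find a neighborhood $\Neig{\S(\dummyu_0)}$ and a neighborhood of $\dummyu_0$ on which $\dumff(\dummyx,\dummyu)<\vartheta(\dummyu_0)+\varepsilon$. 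Condition~(3) then provides, after intersecting with a possibly smaller neighborhood of $\dummyu_0$, a point in $\Neig{\S(\dummyu_0)}\cap\HH(\dummyu)$, which is feasible and has objective value below $\vartheta(\dummyu_0)+\varepsilon$; hence $\vartheta(\dummyu)<\vartheta(\dummyu_0)+\varepsilon$ for $\dummyu$ near $\dummyu_0$. Letting $\varepsilon\downarrow 0$ yields $\limsup_k\vartheta(\dummyu_k)\le\vartheta(\dummyu_0)$, so $\dumff(\bar{\dummyx},\dummyu_0)\le\vartheta(\dummyu_0)$; combined with feasibility this forces $\dumff(\bar{\dummyx},\dummyu_0)=\vartheta(\dummyu_0)$, i.e.\ $\bar{\dummyx}\in\S(\dummyu_0)$, the desired contradiction.
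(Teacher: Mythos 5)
The paper does not actually prove this proposition --- it is quoted without proof from \cite[Proposition 4.4]{JB-AS:00} --- so the only meaningful baseline is the standard argument in that reference, and your proof is correct and essentially reproduces it. Specifically, you use conditions (1)--(2) exactly as intended (existence of minimizers, uniform containment of $\S(\dummyu)$ in the compact set $C$, and graph-closedness to pass feasibility to the limit point $\bar{\dummyx}$), and condition (3) together with compactness of $\S(\dummyu_0)$ and joint continuity of $\dumff$ to get upper semicontinuity of the optimal value, which is precisely the mechanism that forces $\bar{\dummyx}$ to be optimal and yields the contradiction; I see no gaps.
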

\section{Problem statement}
\label{sec:prb_stmt}
We consider $\Tenv$ learning agents and their corresponding environments. Each agent $i \in \until{\Tenv}$ is associated with a Markov decision process given by the tuple $\MDP_{i}=\left(\State, \Action, \PT^{i}, \dis, \Id \right)$. Here, the finite sets $\State := (\state_1, \dots, \state_{\abs{\State}})$ and $\Action := (\action_1, \dots , \action_{\abs{\Action}})$ represent the common state and action spaces, respectively. Agents evolve in different environments specified by their individual transition matrices. In particular, the transition matrix for agent $i$ is  $\PT^i \in [0,1]^{\abs{\State} \abs{\Action} \times \abs{\State}}$, where given a state-action pair $(\state,\action) \in \State \times \Action$, the row corresponding to it, $\PT^i_{(\state,\action),:}$, gives the distribution of the next state. For notational convenience, we also denote this distribution as $\PT^i(\cdot | \state,\action)$. Thus, given a  state $\hat{\state} \in \State$, the probability of reaching it from state $\state$ using action $\action$ is $\PT^i(\hat{\state} | \state,\action)$. The discount factor and the distribution of the initial state of all agents are denoted by $\dis \in (0,1)$ and $\Id \in \simplex{\abs{\State}}$, respectively.

An agent $i$ has access to an expert's behaviour in its environment. Each expert $i$ acts according to a policy given by the map $\map{\policy{}{E_i}}{\State}{\simplex{\abs{\Action}}}$, that is, at state $s \in \State$, the distribution of the selected action by the expert is given by $\policy{}{E_i}(s)$. We assume that each expert's policy is stationary and we denote the set of stationary policies by $\polspace$, that is, $\polspace := \{\policy{}{}:\State\to\simplex{\abs{\Action}}\}$. The expert $i$'s policy is aimed at minimizing a cost function $\Cst_i:\State\times\Action \rightarrow \R{}{}$ associated with the task. This cost is unknown to us, however, the set where the cost belongs is known and is given as $\Cst_i \in \CB := \{ \sum_{j=1}^{\CBn} \CW_j \CBe_j \mid \norm{ \CW }_\infty \leqslant 1\}$, where each $\CW_j$ is the $j$-th component of the cost weight vector $\CW \in \R{n_c}{}$ and it represents the weight associated to the $j$-th basis vector $\CBe_j \in \R{\abs{\State}\abs{\Action}}{}$. These $\CBn$ basis vectors are fixed and they satisfy $\norm{\CBe_j}_\infty \leqslant 1$ for all $j \in \until{\CBn}$. The behavior of the expert that is governed by its policy is known to us through the occupation measure that it generates. We elaborate on this next.

Given a policy $\policy{}{}$ and the initial distribution $\Id \in \simplex{\abs{\State}}$, the induced probability measure over the canonical sample space $\Omega := (\State \times \Action)^\infty$ for agent $i$ is, $\PP_{\Id}^{\policy{}{}, i}[\cdot]$. Here, $\PP_{\Id}^{\policy{}{}, i}[\state_t=\state, \action_t=\action]$ denotes the probability that agent $i$ is in state $\state$ and takes an action $\action$ at time instant $t$ starting from an initial state distribution $\Id$ and following a policy $\policy{}{}$. For a given $\policy{}{} \in \polspace$, the discounted occupation measure for agent $i$, denoted $\Ocm{\policy{}{}}{i} :\State \times \Action \to \R{}{}$, is defined as $\Ocm{\policy{}{}}{i} (\state,\action) := \sum_{t=0}^{\infty} \dis^t \PP_{\Id}^{\policy{}{},i}[\state_t=\state, \action_t=\action]$. It is interpreted as the discounted expected number of times a state-action pair is visited by the agent $i$ starting from an initial state distribution $\Id$, and by following a policy $\policy{}{}$. We assume that for each environment $i$ the occupation measure generated by the expert $\Ocm{\policy{}{E_i}}{i}$ is known. This constitutes the behavior of the expert available to us. For environment $i$, consider the set
\begin{equation}
\label{eq:ocm_fes}
	\F{i} := \setdef{\Ocm{}{}\in\R{\abs{\State}\abs{\Action}}{\geq 0}} {(\B-\dis\PT^i)^\top \Ocm{}{}=\Id},
\end{equation}
where $\B \in \{0,1\}^{|\State||\Action| \times |\State|}$ is a binary matrix where the element $\B_{(\state_j,\action_k),\state_l}=1$ if $j=l$, and $\B_{(\state_j,\action_k),\state_l}=0$ otherwise. From~\cite[Theorem 2]{US-MB-RS:08} we know that, for every $\policy{}{} \in \polspace$, the corresponding occupation measure $\Ocm{\policy{}{}}{i}$ belongs to $\F{i}$. Also, given any $\Ocm{}{i} \in \F{i}$, a stationary policy $\policy{}{\Ocm{}{i}} \in \polspace$ is obtained by setting $\policy{}{\Ocm{}{i}}(\action, \state) := \frac{\Ocm{}{i}(\state, \action)}{\sum_{\state'\in \State}\Ocm{}{i}(\state,\action)}$. In addition, this correspondence is one-to-one, that is, the induced occupation measure for the policy $\policy{}{\Ocm{}{i}}$ is $\Ocm{\policy{}{\Ocm{}{i}}}{i}=\Ocm{}{i}$.
Given any cost function $\Cst_i:\State\times\Action \rightarrow \R{}{}$, the expected discounted cost incurred by the agent $i$ is $ \Ted_{\Cst_i} (\policy{}{})  = \E_{\Id}^{\policy{}{},i} \left[ \sum_{t=0}^{\infty} \dis^t \Cst_i(\state_t,\action_t)\right]$, here the expectation is with respect to the distribution $ \PP_{\Id}^{\policy{}{}, i}$. This can also be represented  as the inner product of discounted occupation measure and the cost vector, that is, $\Ted_{\Cst_i} (\policy{}{}) =  \langle \Ocm{\policy{}{}}{i}, \Cst_i \rangle$.

For agent $i$, the goal of learning (when decoupled from the other agents and environments) is to find a policy $\policy{}{i} \in \polspace$ such that $\langle \Ocm{\policy{}{i}}{i}, \Cst_{i}\rangle \leq \langle \Ocm{\policy{}{E_i}}{i}, \Cst_{i}\rangle $, where $\Ocm{\policy{}{E_i}}{i}$ is the occupation measure induced by the expert's policy $\policy{}{E_i}$. However, note that the expert's cost $\Cst_{i}$ is usually unknown as only the behavior in terms of trajectories is available. Instead of knowing the exact cost, we assume that the agent knows the set $\CB$ where the true cost belongs. Consequently, the goal of learning then translates to finding a policy $\policy{}{i}$ such that $\langle \Ocm{\policy{}{i}}{i}, c\rangle \leq \langle \Ocm{\policy{}{E_i}}{i}, \Cst\rangle $ for all $\Cst \in \CB$, i.e., the policy $\policy{}{}$ must out-perform the expert policy for all $\Cst \in \CB$. Such a framework is well studied in the apprenticeship learning literature, see e.g.,~\cite{PA-AN:04},~\cite{US-RES:08}, and~\cite{US-MB-RS:08}. Thus, the objective for agent $i$ in apprenticeship learning, decoupled from all other agents and environments, is: 
\begin{equation}\label{eq:decoupled_obj}
    \min_{\policy{}{i}\in\polspace}\sup_{\Cst \in \CB}(\langle \Ocm{\policy{}{i}}{i}, c\rangle - \langle \Ocm{\policy{}{E_i}}{i}, c\rangle).
\end{equation}
One can simplify the objective function~\eqref{eq:CrossLearning_obj_1-obj} by utilizing the structure of $\CB$. Following the notation in~\cite{AK-GB-JL:19}, we define $\CBE := [\CBe_1, \dots, \CBe_\CBn] \in \R{\abs{\State}\abs{\Action}\times \CBn}{}$ as the \emph{cost basis matrix}.
For every $\policy{}{i}$, the following holds~\cite[Lemma 1]{AK-GB-JL:19},
\begin{equation*}
	\sup_{\Cst\in \CB} (\langle \Ocm{\policy{}{i}}{i},\Cst\rangle-\langle \Ocm{\policy{}{E_i}}{i},\Cst \rangle) = \norm{\CBE^\top \Ocm{\policy{}i}{i} - \CBE^\top  \Ocm{\policy{}{E_i}}{i}}_1.
\end{equation*}
Thus, problem~\eqref{eq:decoupled_obj} can be equivalently written as
\begin{equation}\label{eq:decoupled_obj-reform}
	\min_{\policy{}{i}\in\polspace} \norm{\CBE^\top \Ocm{\policy{}{i}}{i} - \CBE^\top  \Ocm{\policy{}{E_i}}{i}}_1.
\end{equation}
Note that the objective function is nonnegative and the optimal value is zero as $\policy{*}{i}=\policy{}{E_i}$ is one of the optimizers. Additionally, for any optimizer $\policy{*}{i}$ of~\eqref{eq:decoupled_obj-reform}, we have
\begin{align*}
	\langle \Ocm{\policy{*}{i}}{}, \Cst\rangle = \langle  \Ocm{\policy{}{E_i}}{} , \Cst\rangle \quad \text{for all } \Cst\in \CB. 
\end{align*} 
If $\CB$ contains all possible cost functions, then the expert policy is the only optimizer of~\eqref{eq:decoupled_obj-reform}. The lower the number of basis vectors in $\CB$ the more flexibility we have to find a policy that performs as well as the expert policy $\policy{}{E_i}$.

Each agent $i$ can solve problem~\eqref{eq:decoupled_obj-reform} and obtain an optimal policy that performs well in its own environment. Such a policy might not perform well in other environments, while the learning task is same in all environments. To capture these commonalities between the environments, motivated by~\cite{JC-JAB-MCF-AR:20}, we define the following \emph{cross apprenticeship learning} (CAL) problem:
\begin{subequations}\label{eq:CrossLearning_obj_2}
\begin{align}
\min _{\{\policy{}{i}\}_{i=1}^\Tenv, \policy{}{c}} & 
\quad \sum_{i=1}^{\Tenv} \norm{\CBE^\top \Ocm{\policy{}{i}}{i} - \CBE^\top \Ocm{\policy{}{E_i}}{i}}_1 \label{eq:CrossLearning_obj_1-obj}
\\
\text{subject to} & \quad  \policy{}{c} \in \polspace,\\
&\quad \policy{}{i} \in \polspace \text{ for all } i\in\until{\Tenv},\\
& \quad \norm{\policy{}{i}-\policy{}{c}}_{\infty} \leq \centrality  \text{ for all }i \in \until{\Tenv}. \label{eq:CrossLearning_obj_1-cons}
\end{align}
\end{subequations}
We denote the set of optimizers of the above problem by $\SS{\Cal}{} \subset \polspace^{\Tenv+1}$. In the above problem, through the decision variable $\policy{}{i}$ we seek a policy that performs well in environment $i$. The objective function is decoupled in this set of \emph{individual policies}. On the other hand, these individual policies are required to be close to a \emph{cross-learned} policy $\policy{}{c}$. The variable $\centrality$ defines this proximity and is termed as the \emph{centrality measure}. The individual policies an agent learns via cross-learning sacrifices optimality in its environment for generalization across all other environments. 

Our aim in this paper is to analyze the properties of the CAL framework~\eqref{eq:CrossLearning_obj_2} and design methods to solve this optimization problem approximately. 
\section{Properties of CAL framework}
\label{sec:performance}
The objective of this section is to analyze the performance of the individual and the cross-learned policies across different environments. We consider the following general \emph{performance function} for a policy $\policy{}{} \in \polspace$:
\begin{equation}
\label{eq:perfrmnc}
	\performance{\perfvec}{\policy{}{}} := \sum_{i=1}^{\Tenv} \perfvec_i \norm{ \CBE^\top \Ocm{\policy{}{}}{i} - \CBE^\top \Ocm{\policy{}{E_i}}{i}}_1,
\end{equation}
where $\perfvec \in \simplex{\Tenv}$ represents the weight given to individual environments. In the above definition, the value $\norm{ \CBE^\top \Ocm{\policy{}{}}{i} - \CBE^\top \Ocm{\policy{}{E_i}}{i}}_1$ determines how well the policy $\policy{}{}$ performs in an environment $i$. A lower value indicates that the cost incurred by the policy is close to that by the expert. Therefore, a low value of performance function implies that the policy performs better across environments, where the importance attached to each environment is represented by the weighing $\beta$. Below we will analyze the properties of the above function.
\subsection{Continuity of $V_\beta$}
The right-hand side of~\eqref{eq:perfrmnc} depends on the policy implicitly through the occupation measure generated.  Therefore we will first examine the maps representing the correspondence between the policy and the occupation measure. To this end, we define the following two maps between the policy space $\polspace$ and the set of feasible occupation measures $\F{i}$ (see~\eqref{eq:ocm_fes}) for some environment $i\in\until{\Tenv}$:
\begin{subequations}
\label{eq:maps}
\begin{flalign}
\OcmPol{}{i} : &\F{i} \to \polspace, \text{ where } \, \, \OcmPol{}{i} (\Ocm{}{}) (\state,\action) = \frac{\Ocm{}{}(\state,\action)}{\sum_{\action' \in \Action} \Ocm{}{}(\state,\action')} && \label{eq:OcmPol} \\
\PolOcm{}{i} : &\polspace \to \F{i}, \text{ where } \, \,\PolOcm{}{i}(\pi)(\state,\action)&& \nonumber \\
&\qquad \qquad \qquad \quad  =\sum_{t=0}^{\infty} \dis^t \PP_{\Id}^{\policy{}{},i}[\state_t=\state, \action_t=\action],&& \label{eq:PolOcm}
\end{flalign}
\end{subequations}
for all $\Ocm{}{}\in\F{i}$, $\policy{}{} \in \polspace$ and $(\state,\action) \in \State \times \Action$. Note that $\OcmPol{}{i}$ is same for all environments. We have used the subscript to denote that the domain is different for each of these function. Recall that in the shorthand notation that we introduced earlier, we use $\OcmPol{}{i}(\Ocm{}{}) = \policy{}{\Ocm{}{}}$ and $\PolOcm{}{i} (\policy{}{}) = \Ocm{\policy{}{}}{i}$. Before we delve into analyzing properties of the above defined maps, we derive the following bounds on the occupation measure that will be used later. 
\begin{lemma}
\label{lem:ocm_sum}
\longthmtitle{Bounds on the state occupation measure}
Given a policy $\policy{}{} \in \polspace$ and any environment $i\in\until{\Tenv}$, for all $\state\in\State$, it follows that $\Id(\state) \leq \sum_{\action\in\Action}\Ocm{\policy{}{}}{i}(\state, \action) \leq \frac{\abs{\Action}}{1-\dis}$.
\end{lemma}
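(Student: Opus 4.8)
The plan is to reduce the inner sum over actions to a discounted state-visitation series and then bound that series from above and below. Starting from the definition of the occupation measure in~\eqref{eq:PolOcm}, I would write
\begin{equation*}
\sum_{\action\in\Action}\Ocm{\policy{}{}}{i}(\state,\action) = \sum_{\action\in\Action}\sum_{t=0}^{\infty}\dis^t\,\PP_{\Id}^{\policy{}{},i}[\state_t=\state,\action_t=\action].
\end{equation*}
Because every summand is nonnegative, I can interchange the two sums by Tonelli's theorem, and collapsing the action marginal gives $\sum_{\action\in\Action}\PP_{\Id}^{\policy{}{},i}[\state_t=\state,\action_t=\action]=\PP_{\Id}^{\policy{}{},i}[\state_t=\state]$. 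The quantity of interest therefore equals $\sum_{t=0}^{\infty}\dis^t\,\PP_{\Id}^{\policy{}{},i}[\state_t=\state]$, the discounted probability of occupying state $\state$.

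For the lower bound, I would keep only the $t=0$ term of this series and discard the rest, which is justified by nonnegativity of all terms. At the initial instant the state is distributed according to $\Id$, so $\PP_{\Id}^{\policy{}{},i}[\state_0=\state]=\Id(\state)$, and since $\dis^0=1$ this immediately yields $\sum_{\action\in\Action}\Ocm{\policy{}{}}{i}(\state,\action)\ge\Id(\state)$.

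For the upper bound, I would instead bound each probability crudely by $\PP_{\Id}^{\policy{}{},i}[\state_t=\state,\action_t=\action]\le 1$ \emph{before} summing over the $\abs{\Action}$ actions, which produces the factor $\abs{\Action}$, and then sum the geometric series $\sum_{t=0}^{\infty}\dis^t=\frac{1}{1-\dis}$, valid since $\dis\in(0,1)$. This gives the claimed upper bound $\frac{\abs{\Action}}{1-\dis}$.

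I do not anticipate any substantive obstacle, as the argument is elementary; the only steps needing a word of justification are the interchange of summations (legitimate by nonnegativity) and the identification of the time-zero state marginal with $\Id$, which follows from the construction of $\PP_{\Id}^{\policy{}{},i}$ with initial distribution $\Id$. I would also note in passing that the upper bound is deliberately loose: bounding the state marginal $\PP_{\Id}^{\policy{}{},i}[\state_t=\state]\le 1$ directly would yield the sharper $\frac{1}{1-\dis}$, but the stated $\frac{\abs{\Action}}{1-\dis}$ suffices for the later developments.
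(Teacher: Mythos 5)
Your proof is correct and follows essentially the same route as the paper's: reduce to the discounted state-visitation series, take the $t=0$ term for the lower bound, and bound each joint probability by $1$ before summing over actions and the geometric series for the upper bound. Your parenthetical observation that bounding the state marginal directly would sharpen the upper bound to $\frac{1}{1-\dis}$ is also correct, though the paper retains the looser constant.
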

\begin{proof}
Following the definition of the occupation measure, we have,
\begin{equation*}
\begin{aligned}
    \sum_{\action\in\Action}\Ocm{\policy{}{}}{i}(\state, \action)&=\sum_{\action\in\Action} \sum_{t=0}^{\infty} \dis^t \PP_{\Id}^{\policy{}{},i}[\state_t=\state, \action_t=\action] 
    \\
    &= \sum_{t=0}^{\infty} \dis^t \sum_{\action\in\Action} \PP_{\Id}^{\policy{}{},i}[\state_t=\state, \action_t=\action] \\ 
    & = \sum_{t=0}^{\infty} \dis^t \PP_{\Id}^{\policy{}{},i}[\state_t=\state]
    \\
    &\overset{(a)}{=} \PP_{\Id}^{\policy{}{},i}[\state_0=\state] + \sum_{t=1}^{\infty} \dis^t \PP_{\Id}^{\policy{}{},i}[\state_t=\state]\overset{(b)}{\geq} \Id(\state),
\end{aligned}    
\end{equation*}
where $\PP_{\Id}^{\policy{}{},i}[\state_t=\state]$ is the probability that the agent $i$ is in state $s$ at time instant $t$ starting with initial distribution $\Id$ and following policy $\policy{}{}$. In the above relations, (a) is obtained by taking out the first term from the summation and (b) is due to that fact that the second term is nonnegative in the previous equality and $\Id(\state) = \PP_{\Id}^{\policy{}{},i}[\state_0=\state] $. For the upper bound we have,
\begin{flalign*}
    \sum_{\action\in\Action}\Ocm{\policy{}{}}{i}(\state, \action)= \sum_{\action\in\Action} \sum_{t=0}^{\infty} \dis^t \PP_{\Id}^{\policy{}{},i}[\state_t&=\state, \action_t=\action] &&\\
    &\leq \sum_{\action\in\Action}\sum_{t=0}^{\infty} \dis^t=\frac{\abs{\Action}}{1-\dis}.&&
\end{flalign*}
This completes the proof. 
\end{proof}
Using the preceding results, in the following Lemma we present continuity properties of the maps $\OcmPol{}{i}$ and $\PolOcm{}{i}$. In particular, both these functions are bijections, continuously differentiable, and Lipschitz.
\begin{lemma}
\longthmtitle{Properties of the maps $\OcmPol{}{i}$ and $\PolOcm{}{i}$}
\label{lem:map_prop}
For some environment $i \in \until{\Tenv}$, consider the maps $\OcmPol{}{i}$ and $\PolOcm{}{i}$ as defined in~\eqref{eq:maps}. The following properties hold:
\begin{enumerate}
    \item \label{lem:map_prop1} Maps $\OcmPol{}{i}$ and $\PolOcm{}{i}$ are continuously differentiable on $\F{i}$ and $\polspace$, respectively.
    \item \label{lem:map_prop3} For all $\Ocm{}{1}, \Ocm{}{2}\in \F{i}$ we have, $\norm{\OcmPol{}{i} (\Ocm{}{1})-\OcmPol{}{i}(\Ocm{}{2})}_{2} \leq \frac{2}{\min_{\state\in\State}\Id(\state)} \norm{\Ocm{}{1}-\Ocm{}{2}}_1$.
    \item \label{lem:map_prop4} There exists a $L_{i}^{\PolOcm{}{}}>0$ such that, for all $\policy{}{1}, \policy{}{2} \in \polspace$ we have, $\norm{ \PolOcm{}{i}(\policy{}{1})- \PolOcm{}{i}(\policy{}{2})}_{2}\leq L_{i}^{\PolOcm{}{}} \norm{\policy{}{1}-\policy{}{2}}_{2}$.
\end{enumerate}
\end{lemma}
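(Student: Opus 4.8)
The plan is to treat the three claims separately, using the explicit formulas for the two maps in~\eqref{eq:maps} together with Lemma~\ref{lem:ocm_sum} and the fact, recorded in Section~\ref{sec:prb_stmt}, that every $\Ocm{}{}\in\F{i}$ is the occupation measure of some stationary policy (so that the bounds of Lemma~\ref{lem:ocm_sum} apply to all of $\F{i}$). Throughout I abbreviate $\delta:=\min_{\state\in\State}\Id(\state)$, which is positive because $\Id$ has full support.

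For claim~\ref{lem:map_prop1}, the map $\OcmPol{}{i}$ in~\eqref{eq:OcmPol} assigns to $\Ocm{}{}$ the components $\Ocm{}{}(\state,\action)/\sum_{\action'\in\Action}\Ocm{}{}(\state,\action')$, a ratio of affine functions of $\Ocm{}{}$ whose denominator satisfies $\sum_{\action'\in\Action}\Ocm{}{}(\state,\action')\geq\Id(\state)\geq\delta>0$ on $\F{i}$ by Lemma~\ref{lem:ocm_sum}; being bounded away from zero, the quotient is smooth, hence $C^1$. For $\PolOcm{}{i}$ in~\eqref{eq:PolOcm} I would record the standard closed form $\Ocm{\policy{}{}}{i}(\state,\action)=\policy{}{}(\state,\action)\,d^{\policy{}{}}(\state)$ with state-occupation vector $d^{\policy{}{}}=(I-\dis(P^{\policy{}{}})^\top)^{-1}\Id$, where $P^{\policy{}{}}$ is the state-to-state transition matrix induced by $\policy{}{}$ and $\PT^i$. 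Since $P^{\policy{}{}}$ is linear in $\policy{}{}$ and $I-\dis(P^{\policy{}{}})^\top$ is invertible for every $\policy{}{}$ (as $\dis\in(0,1)$ while $P^{\policy{}{}}$ is row-stochastic, so $\dis P^{\policy{}{}}$ has spectral radius at most $\dis<1$), matrix inversion is smooth there; as a product of smooth maps, $\PolOcm{}{i}$ is $C^1$.

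For the Lipschitz estimate~\ref{lem:map_prop3}, set $\policy{}{j}:=\OcmPol{}{i}(\Ocm{}{j})$ and $D_j(\state):=\sum_{\action'\in\Action}\Ocm{}{j}(\state,\action')$ for $j\in\{1,2\}$, and use the splitting
\begin{align*}
\frac{\Ocm{}{1}(\state,\action)}{D_1(\state)}-\frac{\Ocm{}{2}(\state,\action)}{D_2(\state)} &= \frac{\Ocm{}{1}(\state,\action)-\Ocm{}{2}(\state,\action)}{D_1(\state)} \\ &\quad+ \frac{\Ocm{}{2}(\state,\action)}{D_2(\state)}\cdot\frac{D_2(\state)-D_1(\state)}{D_1(\state)}.
\end{align*}
Taking absolute values, bounding $D_1(\state)\geq\delta$, and summing over $\action$ for fixed $\state$, the key point is that $\sum_{\action\in\Action}\Ocm{}{2}(\state,\action)/D_2(\state)=1$, so the second term contributes only $|D_2(\state)-D_1(\state)|/D_1(\state)$ rather than an $\abs{\Action}$-fold multiple; together with $|D_2(\state)-D_1(\state)|\leq\sum_{\action\in\Action}|\Ocm{}{1}(\state,\action)-\Ocm{}{2}(\state,\action)|$ this yields $\sum_{\action\in\Action}|\policy{}{1}(\state,\action)-\policy{}{2}(\state,\action)|\leq(2/\delta)\sum_{\action\in\Action}|\Ocm{}{1}(\state,\action)-\Ocm{}{2}(\state,\action)|$. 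Summing over $\state$ gives the bound in $\ell_1$, and $\norm{\cdot}_2\leq\norm{\cdot}_1$ delivers the claimed $\ell_2$ inequality with constant $2/\delta$.

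Finally, for claim~\ref{lem:map_prop4} the cleanest route is to invoke the regularity just established: $\PolOcm{}{i}$ is $C^1$ and $\polspace\cong(\simplex{\abs{\Action}})^{\abs{\State}}$ is compact and convex, so its Jacobian has finite maximal operator norm $L_{i}^{\PolOcm{}{}}$ over $\polspace$, and the mean value inequality along the segment joining $\policy{}{1}$ and $\policy{}{2}$ (contained in $\polspace$ by convexity) gives the estimate. An explicit constant, if wanted, follows from perturbing the resolvent via the identity $R_1^{-1}-R_2^{-1}=\dis R_1^{-1}((P^{\policy{}{1}})^\top-(P^{\policy{}{2}})^\top)R_2^{-1}$ with $R_j:=I-\dis(P^{\policy{}{j}})^\top$ and bounding each $\norm{R_j^{-1}}$ by $1/(1-\dis)$. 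I expect the main obstacle to be the bookkeeping in claim~\ref{lem:map_prop3}: the quotient difference must be split exactly so that the per-state action sum of the weights $\Ocm{}{2}(\state,\action)/D_2(\state)$ collapses to $1$, since a cruder bound inserts a spurious $\abs{\Action}$ factor and overshoots the stated constant $2/\delta$. Establishing the closed form and invertibility for $\PolOcm{}{i}$ is routine MDP theory, after which claim~\ref{lem:map_prop4} is immediate.
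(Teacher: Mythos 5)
Your proof is correct, and it takes a genuinely more self-contained route than the paper's. The paper outsources its two hardest ingredients to \cite[Proposition 1]{JZ-AK-ASB-CS-MW:20}: claim~\ref{lem:map_prop3} is cited verbatim from that reference, and the continuous differentiability of $\PolOcm{}{i}$ in claim~\ref{lem:map_prop1} is read off from the Lipschitz-gradient statement proved there; only the smoothness of $\OcmPol{}{i}$ is argued directly, via the explicit Jacobian whose denominators Lemma~\ref{lem:ocm_sum} keeps away from zero --- essentially the same denominator argument you give, written out entrywise. You instead prove everything from scratch: the resolvent representation $\PolOcm{}{i}(\policy{}{})(\state,\action)=\policy{}{}(\state,\action)\,\bigl[(I-\dis (P^{\policy{}{}})^\top)^{-1}\Id\bigr](\state)$, with $P^{\policy{}{}}$ the induced state-transition matrix, yields smoothness of $\PolOcm{}{i}$; and your quotient-splitting estimate --- exploiting that the weights $\Ocm{}{2}(\state,\action)/\sum_{\action'\in\Action}\Ocm{}{2}(\state,\action')$ sum to one over actions, so no spurious $\abs{\Action}$ factor appears --- recovers the constant $2/\min_{\state\in\State}\Id(\state)$ of claim~\ref{lem:map_prop3} exactly. (Both proofs, yours explicitly and the paper's implicitly, rely on the one-to-one correspondence between $\F{i}$ and $\polspace$ so that Lemma~\ref{lem:ocm_sum} applies to every element of $\F{i}$, and on $\Id$ having full support.) Claim~\ref{lem:map_prop4} is handled in the same spirit in both: a $C^1$ map on the compact convex set $\polspace$ has bounded Jacobian, hence is Lipschitz by the mean value inequality; the paper phrases this as a consequence of the Lipschitz-gradient and $C^1$ properties. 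What your route buys is independence from the external reference and an inspectable constant; what the paper's buys is brevity. One small caveat on your optional parenthetical for an explicit constant in claim~\ref{lem:map_prop4}: the bound $\norm{R_j^{-1}}\le 1/(1-\dis)$ via the Neumann series is valid in the induced $\ell_1$-norm (columns of $(P^{\policy{}{j}})^\top$ sum to one), but not in general for the spectral norm, since a stochastic matrix can have $2$-norm exceeding one; so either carry out that computation in the $\ell_1$-norm and convert at the end, or simply keep your compactness argument, which is already complete.
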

\begin{proof}
The map $\PolOcm{}{i}$ has Lipschitz continuous gradient over the set $\Pi$, as shown in~\cite[Proposition 1]{JZ-AK-ASB-CS-MW:20}. Thus, $\PolOcm{}{i}$ is continuously differentiable. Regarding $\OcmPol{}{i}$, denote the Jacobian as the map 
$D \OcmPol{}{i} : \F{i} \to \R{\abs{\State}\abs{\Action} \times \abs{\State}\abs{\Action}}{}$. For a given $\Ocm{}{}$, the $(i,j)$-th element of the Jacobian $D \OcmPol{}{i}(\Ocm{}{})$, where index $i$ and $j$ correspond to state-action pairs $(\state_i,\action_i)$ and $(\state_j,\action_j)$, respectively, is
\begin{flalign*}
    D \OcmPol{}{i} (\Ocm{}{}) &(i,j)&&\\ &= \begin{cases} \frac{\sum_{\action'\in\Action}\Ocm{}{}(\state_i, \action')-\Ocm{}{}(\state_i, \action_i)}{(\sum_{\action'\in\Action}\Ocm{}{}(\state_i, \action'))^2} & \quad \text{if }  (\state_i,\action_i) = (\state_j,\action_j),
    \\
    \frac{-\Ocm{}{}(\state_i, \action_i)}{(\sum_{\action'\in\Action}\Ocm{}{}(\state_i, \action'))^2} & \quad \text{if } \state_i = \state_j,
    \\
    0 & \quad \text{otherwise}.
    \end{cases}&&
\end{flalign*}
From Lemma~\ref{lem:ocm_sum} we know that for any state $\state \in \State$, we have $\sum_{\action'\in\Action}\Ocm{}{}(\state, \action')\geq\Id(\state)>0$. Thus, $D \OcmPol{}{i}$ given in the above expression is well-defined and continuous on $\F{i}$. This proves the first claim. The second claim was established in~\cite[Proposition 1]{JZ-AK-ASB-CS-MW:20}. The last conclusion follows from the facts that  $\PolOcm{}{i}$ has a Lipschitz continuous gradient and it is continuously differentiable in $\polspace$.
\end{proof}
The Lipschitz property of the map $\PolOcm{}{i}$ established in the above result aids us in showing the same for the performance function $V_\beta$ given in~\eqref{eq:perfrmnc}. The next result formalizes this statement. This property implies that if two policies are close to each other, as might be the case due to the centrality constraint~\eqref{eq:CrossLearning_obj_1-cons} in the CAL problem, then their performance across the environments will be similar.
\begin{lemma}\longthmtitle{Sensitivity of the performance function with respect to policies}
\label{lem:pergap}
Given two policies $\policy{}{1}, \policy{}{2} \in \polspace$ and $\beta \in \simplex{\Tenv}$, the following holds: 
\begin{equation}
    \abs{\performance{\perfvec}{\policy{}{1}}-\performance{\perfvec}{\policy{}{2}}}\leq \CBn L^{\PolOcm{}{}} \sqrt{\abs{\State}\abs{\Action}} \norm{\policy{}{1}-\policy{}{2}}_2
\end{equation}
where $L^{\PolOcm{}{}} = \max_{i \in \until{\Tenv}} L_i^{\PolOcm{}{}}$, with each $L_i^{\PolOcm{}{}}$ being the Lipschitz constant for the map $\PolOcm{}{i}$ as stated in Lemma~\ref{lem:map_prop}. 
\end{lemma}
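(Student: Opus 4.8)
The plan is to exploit the decoupled structure of $\performance{\perfvec}{\cdot}$ across environments and then reduce everything to the Lipschitz property of $\PolOcm{}{i}$ established in Lemma~\ref{lem:map_prop}. Writing $g_i(\policy{}{}) := \norm{\CBE^\top \Ocm{\policy{}{}}{i} - \CBE^\top \Ocm{\policy{}{E_i}}{i}}_1$, the performance function is the convex combination $\performance{\perfvec}{\policy{}{}} = \sum_{i=1}^{\Tenv} \perfvec_i\, g_i(\policy{}{})$. By the triangle inequality and the fact that $\perfvec \in \simplex{\Tenv}$, it suffices to bound each $\abs{g_i(\policy{}{1}) - g_i(\policy{}{2})}$ uniformly in $i$ and then take the weighted sum, whose weights add up to one.

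First I would apply the reverse triangle inequality for the $\ell_1$ norm. Since both $g_i(\policy{}{1})$ and $g_i(\policy{}{2})$ are $\ell_1$ distances measured from the common point $\CBE^\top \Ocm{\policy{}{E_i}}{i}$, this expert term cancels inside the norm and one obtains $\abs{g_i(\policy{}{1}) - g_i(\policy{}{2})} \leq \norm{\CBE^\top(\Ocm{\policy{}{1}}{i} - \Ocm{\policy{}{2}}{i})}_1$. The next step, which is the only part requiring genuine care, is to convert this $\ell_1$ norm of the transformed vector into the Euclidean norm of $v := \Ocm{\policy{}{1}}{i} - \Ocm{\policy{}{2}}{i}$. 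Each component of $\CBE^\top v$ equals $\langle \CBe_j, v \rangle$, so $\norm{\CBE^\top v}_1 = \sum_{j=1}^{\CBn} \abs{\langle \CBe_j, v\rangle}$. Applying Cauchy--Schwarz to each summand gives $\abs{\langle \CBe_j, v\rangle} \leq \norm{\CBe_j}_2 \norm{v}_2$, and the standing assumption $\norm{\CBe_j}_\infty \leq 1$ on the $\abs{\State}\abs{\Action}$-dimensional basis vectors yields $\norm{\CBe_j}_2 \leq \sqrt{\abs{\State}\abs{\Action}}$. Summing over the $\CBn$ basis vectors produces the factor $\CBn \sqrt{\abs{\State}\abs{\Action}}$.

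It then remains to invoke the Lipschitz bound of Lemma~\ref{lem:map_prop}: since $\Ocm{\policy{}{}}{i} = \PolOcm{}{i}(\policy{}{})$ and $\PolOcm{}{i}$ is Lipschitz with constant $L_i^{\PolOcm{}{}}$, we have $\norm{v}_2 \leq L_i^{\PolOcm{}{}} \norm{\policy{}{1} - \policy{}{2}}_2$. Chaining these estimates bounds each $\abs{g_i(\policy{}{1}) - g_i(\policy{}{2})}$ by $\CBn \sqrt{\abs{\State}\abs{\Action}}\, L_i^{\PolOcm{}{}} \norm{\policy{}{1}-\policy{}{2}}_2$. Finally, replacing $L_i^{\PolOcm{}{}}$ by the uniform constant $L^{\PolOcm{}{}} = \max_{i} L_i^{\PolOcm{}{}}$ and using $\sum_{i} \perfvec_i = 1$ collapses the weighted sum to the claimed bound. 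The proof is essentially a controlled chain of norm inequalities; the single subtlety worth flagging is the dimension-dependent constant $\sqrt{\abs{\State}\abs{\Action}}$ arising in the $\ell_\infty$-to-$\ell_2$ passage for the basis vectors, which is precisely where the $\sqrt{\abs{\State}\abs{\Action}}$ in the final estimate originates.
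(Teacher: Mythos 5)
Your proof is correct and takes essentially the same approach as the paper's: both decouple across environments via the triangle inequality, cancel the expert term with the reverse triangle inequality, bound $\norm{\CBE^\top(\Ocm{\policy{}{1}}{i}-\Ocm{\policy{}{2}}{i})}_1$ by $\CBn\sqrt{\abs{\State}\abs{\Action}}\,\norm{\Ocm{\policy{}{1}}{i}-\Ocm{\policy{}{2}}{i}}_2$, and finish with the Lipschitz bound of Lemma~\ref{lem:map_prop} together with $\sum_i \perfvec_i = 1$. The only difference is bookkeeping in that middle step---the paper uses the induced-norm bound $\norm{\CBE^\top}_{1,1}\le\CBn$ followed by the $\ell_1$-to-$\ell_2$ equivalence on the occupation-measure difference, whereas you apply Cauchy--Schwarz row-by-row with $\norm{\CBe_j}_2\le\sqrt{\abs{\State}\abs{\Action}}$---and both yield the identical constant.
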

\begin{proof}
We compute 
\begin{flalign*}
    \abs{\performance{\perfvec}{\policy{}{1}}-\performance{\perfvec}{\policy{}{2}}}&=\Bigg|\sum_{k=1}^{\Tenv} \perfvec_k\bigg( \norm{ \CBE^\top \Ocm{\policy{}{1}}{k} - \CBE^\top \Ocm{\policy{}{E_k}}{k}}_1 &&\\
    &\quad- \norm{ \CBE^\top \Ocm{\policy{}{2}}{k} - \CBE^\top \Ocm{\policy{}{E_k}}{k}}_1\bigg)\Bigg| &&\\
    &\overset{(a)}{\leq} \sum_{k=1}^{\Tenv} \perfvec_k\bigg(\norm{ \CBE^\top \Ocm{\policy{}{1}}{k} - \CBE^\top \Ocm{\policy{}{2}}{k}}_1 \bigg) &&\\
    &\overset{(b)}{\leq} \norm{\CBE^\top}_{1,1}\sum_{k=1}^{\Tenv} \perfvec_k\bigg(\norm{\Ocm{\policy{}{1}}{k} - \Ocm{\policy{}{2}}{k}}_1\bigg) &&\\
    &\overset{(c)}{\leq} \CBn L^{\PolOcm{}{}} \sqrt{\abs{\State}\abs{\Action}}\norm{\policy{}{1} - \policy{}{2}}_2 \sum_{k=1}^{\Tenv} \perfvec_k &&\\
    &\overset{(d)}{=} \CBn L^{\PolOcm{}{}} \sqrt{\abs{\State}\abs{\Action}}\norm{\policy{}{1} - \policy{}{2}}_2,&&
\end{flalign*}
where inequality $(a)$ is a consequence of the triangle inequality, $(b)$ is due to the submultiplicity of induced matrix norm, $(c)$ is due to Lemma~\ref{lem:map_prop4} and the fact that the elements of the cost basis $\CBE$ satisfy $\norm{\CBe_j}_\infty \leq 1$ for all $j=1,\dots ,\CBn$, and $(d)$ is because $\beta \in \simplex{\Tenv}$. 
\end{proof}
From the above result, by considering $\perfvec = e_i$ for some environment $i \in \until{\Tenv}$, we obtain the bound on the difference in the performance of two policies in that environment. This set of Lemmas will be useful in the subsequent section in analyzing the specific and generic performance of the policies obtained through the CAL problem.
\subsection{Specific and generic performance of CAL}
As mentioned earlier, the solution of the CAL-framework results in $\Tenv$ individual policies and a cross-learned policy. In this section, we investigate the performance of these policies in individual environments as well as across environments. We term these properties as \emph{specific} and \emph{generic} performance, respectively. We demonstrate how by tuning the centrality measure $\centrality$, one targets to maximize for one of these performances. 

We first introduce relevant notation. Let the set of optimal policies for the decoupled learning problem of agent $i$ given in~\eqref{eq:decoupled_obj} be denoted as $\SS{\dec,*}{i} \subset \polspace$. That is, 
\begin{align*}
	\SS{\dec}{i} := \argmin_{\policy{}{i}\in\polspace} \norm{\CBE^\top \Ocm{\policy{}{i}}{i} - \CBE^\top \Ocm{\policy{}{E_i}}{i}}_{1}.
\end{align*}
We refer to $\policy{\dec,*}{i} \in \SS{\dec}{i}$ as the optimal \emph{decoupled policy}. Note that if we choose $\epsilon$  to be large enough, then the CAL framework finds these decoupled optimal policies in the form of individual policies. Specifically, if
\begin{align*}
	2 \centrality > \max_{i,j\in\until{\Tenv}} \setdef{\norm{ \policy{}{i} - \policy{}{j}}_\infty}{\policy{}{i} \in \SS{\dec}{i}, \policy{}{j} \in \SS{\dec}{j}}  ,
\end{align*}
then any optimizer of CAL, denoted $(\{\policy{*}{i}\}_{i=1}^{\Tenv},\policy{*}{c}) \in \SS{\Cal}{}$, satisfies $\policy{*}{i} \in \SS{\dec}{i}$ for all $i \in \until{\Tenv}$. That is, not all constraints of the CAL problem are binding. On the other hand, when $\centrality = 0$, then  $\policy{*}{i} \in \SS{\cen}{}$ for all $i \in \until{\Tenv}$ where 
\begin{align}
\label{eq:S_cen}
	\SS{\cen}{} := \argmin_{\policy{}{} \in \polspace} \sum_{i=1}^{\Tenv} \norm{\CBE^\top \Ocm{\policy{}{}}{i} - \CBE^\top \Ocm{\policy{}{E_i}}{i}}_1. 
\end{align}
We refer to any policy $\policy{\cen,*}{} \in \SS{\cen}{}$ as the optimal \emph{centralized policy}. We have the following first result that provides the specific performance of the optimizers in $\SS{\Cal}{}$.
\begin{proposition}\longthmtitle{Specific performance of $\SS{\dec}{i}$  and $\SS{\Cal}{}$}\label{le:ind-env-per}
	For any $\policy{\dec,*}{i} \in \SS{\dec}{i}$ and $(\{\policy{*}{i}\}_{i=1}^{\Tenv}, \policy{*}{c}) \in \SS{\Cal}{}$, the following hold:
\begin{enumerate}
	\item $\performance{e_i}{\policy{\dec,*}{i}} \leq \performance{e_i}{\policy{*}{i}}  \leq \performance{e_i}{\policy{*}{c}}$, where $e_i$ is the unit vector with $i$-th component being unity,
	\item $\performance{e_i}{\policy{*}{i}} \leq \performance{e_i}{\policy{*}{j}}$ for all $i$ and $j \not = i$.
\end{enumerate}
\end{proposition}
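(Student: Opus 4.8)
The plan is to exploit the fact that the CAL objective~\eqref{eq:CrossLearning_obj_1-obj} decouples completely across the individual policies. Writing $\performance{e_i}{\policy{}{}} = \norm{\CBE^\top \Ocm{\policy{}{}}{i} - \CBE^\top \Ocm{\policy{}{E_i}}{i}}_1$ (which is exactly the performance function~\eqref{eq:perfrmnc} evaluated at $\perfvec = e_i$), the CAL objective equals $\sum_{i=1}^{\Tenv} \performance{e_i}{\policy{}{i}}$, so the $i$-th summand depends only on $\policy{}{i}$ and on neither the other individual policies nor on $\policy{}{c}$. The sole coupling between the decision variables is through the centrality constraints~\eqref{eq:CrossLearning_obj_1-cons}. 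Both parts will then follow from elementary exchange arguments: replacing $\policy{*}{i}$ by a competing \emph{feasible} policy that does strictly better in environment $i$ would strictly lower the objective, contradicting optimality. Notably, this proposition is purely optimization-theoretic and does not invoke the continuity or Lipschitz machinery of Lemmas~\ref{lem:map_prop} and~\ref{lem:pergap}.

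For the first inequality in part~(1), I would simply observe that $\SS{\dec}{i}$ is, by definition, the set of minimizers of $\policy{}{} \mapsto \performance{e_i}{\policy{}{}}$ over all of $\polspace$. Since the individual policy $\policy{*}{i}$ from any CAL optimizer lies in $\polspace$, it is a feasible point for this unconstrained minimization, whence $\performance{e_i}{\policy{\dec,*}{i}} \leq \performance{e_i}{\policy{*}{i}}$.

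For the second inequality in part~(1), I would argue by contradiction: suppose $\performance{e_i}{\policy{*}{c}} < \performance{e_i}{\policy{*}{i}}$. Form a new candidate for the CAL problem by keeping every $\policy{*}{j}$ with $j \neq i$ and the cross-learned policy $\policy{*}{c}$ unchanged, but replacing $\policy{*}{i}$ with $\policy{*}{c}$. This candidate is feasible, since $\policy{*}{c} \in \polspace$ and $\norm{\policy{*}{c} - \policy{*}{c}}_\infty = 0 \leq \centrality$, while all remaining constraints are untouched. By the decoupling of the objective, the only change is in the $i$-th summand, which strictly decreases, contradicting optimality. Part~(2) is identical in spirit: assuming $\performance{e_i}{\policy{*}{j}} < \performance{e_i}{\policy{*}{i}}$ for some $j \neq i$, I would replace $\policy{*}{i}$ by $\policy{*}{j}$. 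This substitution is feasible precisely because $\policy{*}{j}$ already satisfies $\norm{\policy{*}{j} - \policy{*}{c}}_\infty \leq \centrality$ as a component of the same CAL optimizer, so the resulting candidate again lowers the objective strictly.

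The arguments are short, and the only point requiring care — the main obstacle, such as it is — is verifying that each proposed substitution remains in the feasible set. This hinges on two observations: the $i$-th term of the objective is insensitive to every variable except $\policy{}{i}$, and every competing policy used as a replacement (namely $\policy{*}{c}$ in part~(1) and $\policy{*}{j}$ in part~(2)) already satisfies the centrality constraint with respect to the \emph{same} fixed $\policy{*}{c}$. Once feasibility is secured, the contradiction is immediate from strict improvement of a single decoupled summand.
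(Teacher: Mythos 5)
Your proof is correct and is essentially the paper's own argument: the paper states the same exchange idea as the claim that $\policy{*}{i}$ minimizes $\performance{e_i}{\cdot}$ over $\setdef{\policy{}{}\in\polspace}{\norm{\policy{}{}-\policy{*}{c}}_\infty\le\centrality}$ (justified, as you do, by the decoupled objective and optimality of the CAL solution), and then concludes both inequalities from the feasibility of $\policy{*}{c}$ and $\policy{*}{j}$ for that restricted subproblem. Your explicit contradiction-by-substitution phrasing is just an unpacked version of that same reasoning.
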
    
\begin{proof}
The first inequality follows from the fact $\policy{\dec,*}{i}$ is an optimizer of $\policy{}{} \mapsto \performance{e_i}{\policy{}{}}$ over the set $\polspace$ and $\policy{*}{i}$ belongs to the set $\polspace$. For the second inequality, note that 
\begin{align}\label{eq:opt_ind}
	\policy{*}{i} \in \argmin_{\policy{}{} \in \polspace} \setdef{\performance{e_i}{\policy{}{}}}{ \norm{\policy{}{}-\policy{*}{c}}_{\infty} \leq \centrality }.
\end{align}
This is true because otherwise we contradict the fact that $(\{\policy{*}{i}\}_{i=1}^{\Tenv}, \policy{*}{c})$ is an optimizer of~\eqref{eq:CrossLearning_obj_2}. From the above observation and the fact that $\policy{*}{c}$ trivially belongs to the feasibility set of~\eqref{eq:opt_ind}, we conclude the second inequality. The last inequality also follows from the fact that $\policy{*}{j}$ belongs to the feasibility set of~\eqref{eq:opt_ind}, therefore $\performance{e_i}{\policy{*}{i}}$ is at most equal to $\performance{e_i}{\policy{*}{j}}$.  
\end{proof}
The above result shows that, as expected, the decoupled policy of environment $i$ outperforms the individual and the cross-learned policy obtained from the CAL problem in that environment. Moreover, in environment $i$, the individual optimal policy $\policy{*}{i}$ obtained in CAL performs better than the cross-learned policy  $\policy{*}{c}$ and any other individual policy $\policy{*}{j}$. The above result is irrespective of the value of the centrality measure. Next, we analyze the performance of the policies obtained across environments, where the selection of centrality measure $\centrality$ becomes key. We will use Lemma~\ref{lem:pergap} and show that for small enough values of $\centrality$ the individual optimal policies of CAL outperform the decoupled optimal policies across environments. In order to obtain the formal result, the first step is to analyze the set-valued map that gives the set of optimizers of the CAL problem given the parameter $\centrality$.
\begin{lemma}\longthmtitle{Upper semicontinuity of set of optimizers of CAL with respect to $\centrality$}
\label{lem:usc}
Define the map,
\begin{flalign}
\label{eq:phi_def}
    \fesx(\centrality):=\setdefbig{(\{\policy{}{i}\}_{i=1}^{\Tenv}&,\policy{}{c}) \in \polspace^{\Tenv+1}} {&& \nonumber \\ & \norm{\policy{}{i}-\policy{}{c}}_{\infty}\leq \centrality, \text{ for all } i \in\until{\Tenv}}&&
\end{flalign}
that gives the feasibility set of~\eqref{eq:CrossLearning_obj_2} for a given $\centrality \ge0$.
Then, the set-valued map $\SS{\Cal}{}:[0,1] \rightrightarrows \polspace^{\Tenv+1}$ defined as
\begin{equation}
\label{eq:mul_S}
    \SS{\Cal}{}(\centrality):=\argmin_{(\{\policy{}{i}\}_{i=1}^{\Tenv},\policy{}{c})\in \fesx(\centrality)} \sum_{i=1}^{\Tenv} \norm{\CBE^\top \Ocm{\policy{}{i}}{i} - \CBE^\top \Ocm{\policy{}{E_i}}{i}}_1
\end{equation}
is upper semicontinuous at $\centrality=0$.
\end{lemma}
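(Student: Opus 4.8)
The plan is to verify that the CAL optimizer map $\SS{\Cal}{}$ fits the template of Proposition~\ref{prop:usc}, with the centrality measure $\centrality$ playing the role of the parameter $\dummyu$, and then to check its three hypotheses at the point $\centrality=0$. Concretely, I would take the decision variable to be the tuple $\dummyx = (\{\policy{}{i}\}_{i=1}^{\Tenv},\policy{}{c})$, the ambient closed set to be $\Dummyx = \polspace^{\Tenv+1}$, the parameter set to be $\Dummyu = [0,1]$, the objective to be $\dumff(\dummyx,\centrality) = \sum_{i=1}^{\Tenv}\norm{\CBE^\top \Ocm{\policy{}{i}}{i}-\CBE^\top \Ocm{\policy{}{E_i}}{i}}_1$ (which is independent of $\centrality$), and the single scalar constraint to be $\dumfG(\dummyx,\centrality) = \max_{i\in\until{\Tenv}}\norm{\policy{}{i}-\policy{}{c}}_\infty - \centrality \le 0$. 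With these identifications, $\HH(\centrality)=\fesx(\centrality)$ and $\S(\centrality)=\SS{\Cal}{}(\centrality)$. The objective is continuous because each map $\PolOcm{}{i}$ is continuous by Lemma~\ref{lem:map_prop}, and $\dumfG$ is continuous as a finite maximum of continuous functions; thus the continuity requirements of Proposition~\ref{prop:usc} are met.

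I would then check the three hypotheses. For hypothesis (1), the graph of $\fesx$ equals $\setdef{(\centrality,\dummyx)}{\dumfG(\dummyx,\centrality)\le 0}$, which is closed since $\dumfG$ is continuous; hence $\fesx$ is a closed map. For hypothesis (2), I would exploit that $\polspace^{\Tenv+1}$ is compact, being a finite product of simplices, and take $C = \polspace^{\Tenv+1}$. The objective $\dumff$ is continuous on this compact set, hence bounded above by some $\alpha\in\R{}{}$; moreover $\fesx(\centrality)$ is nonempty for every $\centrality\ge0$ (any tuple with $\policy{}{i}=\policy{}{c}$ for all $i$ is feasible), so the level set $\operatorname{lev}_\alpha \dumff(\cdot,\centrality)$ is nonempty and trivially contained in $C$ for all $\centrality$ in a neighborhood of $0$.

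The crux is hypothesis (3), where the key observation is that optimal solutions at $\centrality=0$ persist as feasible points for every larger $\centrality$. Indeed, when $\centrality=0$ the constraint $\norm{\policy{}{i}-\policy{}{c}}_\infty\le 0$ forces $\policy{}{i}=\policy{}{c}$ for all $i$, so every $(\{\policy{}{i}\}_{i=1}^{\Tenv},\policy{}{c})\in\SS{\Cal}{}(0)$ satisfies $\norm{\policy{}{i}-\policy{}{c}}_\infty = 0 \le \centrality$ for every $\centrality\in[0,1]$. Therefore $\SS{\Cal}{}(0)\subset\fesx(\centrality)$ for all such $\centrality$, which gives, for any neighborhood $\Neig{\SS{\Cal}{}(0)}$ and any neighborhood $\Neig{0}$ of $0$, the inclusion $\SS{\Cal}{}(0)\subset\Neig{\SS{\Cal}{}(0)}\cap\fesx(\centrality)$; since $\SS{\Cal}{}(0)\ne\emptyset$, hypothesis (3) holds. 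With all three hypotheses verified, Proposition~\ref{prop:usc} yields upper semicontinuity of $\SS{\Cal}{}$ at $\centrality=0$.

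I expect the main obstacle to be one of careful identification rather than genuine difficulty: hypotheses (1) and (2) reduce to continuity and compactness that are already available, and the only substantive point is recognizing in hypothesis (3) that the equality constraints at $\centrality=0$ keep the whole optimizer set feasible as $\centrality$ increases, so no perturbation of the optimal tuples is needed to realize the required intersection. A minor thing to be careful about is that Proposition~\ref{prop:usc} is stated for a single scalar inequality constraint, so I would either collapse the $\Tenv$ centrality constraints into the single function $\dumfG$ as above, or simply note that the result applies verbatim to finitely many continuous constraints.
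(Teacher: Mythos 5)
Your proposal is correct and follows essentially the same route as the paper: both invoke Proposition~\ref{prop:usc} with the identifications $\dummyx \leftrightarrow (\{\policy{}{i}\}_{i=1}^{\Tenv},\policy{}{c})$, $\dummyu \leftrightarrow \centrality$, $\HH \leftrightarrow \fesx$, verify closedness and compactness exactly as you do, and settle the third hypothesis via the same key monotonicity observation $\fesx(0)\subset\fesx(\centrality)$, hence $\SS{\Cal}{}(0)\subset\fesx(\centrality)$, for all $\centrality\ge 0$. Your explicit collapsing of the $\Tenv$ centrality constraints into the single function $\dumfG(\dummyx,\centrality)=\max_{i\in\until{\Tenv}}\norm{\policy{}{i}-\policy{}{c}}_\infty-\centrality$ is a tidy detail the paper glosses over, but it does not change the substance of the argument.
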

\begin{proof}
Our proof is based on Proposition~\ref{prop:usc} that analyzes the continuity of optimizers of a parameterized optimization problem. Drawing the parallelism between~\eqref{eq:par_optmap} and~\eqref{eq:mul_S}, the decision variable $x$, the parameter $u$, the set $\Dummyu$, the objective function $f$, and the set-valued map $\HH$ as given in~\eqref{eq:par_optmap} are to be considered analogously in~\eqref{eq:mul_S} as  the variable $(\{\policy{}{i}\}_{i = 1}^\Tenv , \policy{}{c})$, the parameter $\centrality$, the set $[0,1]$, the objective function
\begin{align*}
    \bar{\dumff}(\{\policy{}{i}\}_{i=1}^{\Tenv},\policy{}{c}):=\sum_{i=1}^{\Tenv} \norm{\CBE^\top \Ocm{\policy{}{i}}{i} - \CBE^\top  \Ocm{\policy{}{E_i}}{i}}_1,
\end{align*}
and the map $\fesx$ defined in~\eqref{eq:phi_def}, respectively. Note that in~\eqref{eq:mul_S}, the objective function does not depend on the parameter. The proof now proceeds by checking the conditions of Proposition~\ref{prop:usc}. Firstly, the objective function $\bar{\dumff}$ and the constraint function in~\eqref{eq:phi_def} are continuous. The set-valued map $\fesx$ is closed. The second condition in Proposition~\ref{prop:usc} holds as $\bar{\dumff}$ is bounded on $\polspace^{\Tenv+1}$ and the set $\fesx(\centrality)$ is nonempty and contained in the compact set $\polspace^{\Tenv+1}$ for any nonnegative $\epsilon$. Lastly, for the third condition, note that for every neighborhood $\Neig{0} \subset [0,1]$ of $\centrality = 0$, we have $\fesx(0)\subset\fesx(\centrality)$ for all $\centrality\in\Neig{0}$. Consequently, $\SS{\Cal}{}(0) \subset \fesx(\centrality)$ for all $\centrality\in\Neig{0}$ and so for any neighborhood $\Neig{\SS{\Cal}{}(0)}$ of $\SS{\Cal}{}(0)$ we have $\Neig{\SS{\Cal}{}(0)} \cap \fesx(\centrality)$ for all $\centrality\in\Neig{0}$. Thus, condition three in Proposition~\ref{prop:usc} holds and so, we conclude that $\SS{\Cal}{}$ is upper semicontinuous at the origin.
\end{proof}

With the above continuity property in mind, we next show that if $\centrality$ is small, then the individual optimal policy obtained in CAL has better performance across environments as compared to the decoupled optimal policy.
\begin{proposition}\longthmtitle{Generic performance of $\SS{\dec}{i}$  and $\SS{\Cal}{}$}\label{le:acc-env-per}
For any $(\{\policy{*}{i}\}_{i=1}^{\Tenv}, \policy{*}{c}) \in \SS{\Cal}{}$ and $\policy{\cen,*}{} \in \SS{\cen}{}$, we have
\begin{equation*}
	\performance{N^{-1}\ones}{\policy{\cen,*}{}} \leq \performance{N^{-1}\ones}{\policy{*}{i}}
\end{equation*}
where $N^{-1}\ones$ denotes the vector with each entry as $\frac{1}{\Tenv}$. Further, for any $j \in [\Tenv]$, if $\performance{N^{-1}\ones}{\policy{\cen,*}{}} < \performance{N^{-1}\ones}{\policy{\dec,*}{j}}$, then there exists an $\centrality > 0$ such that the following holds,
\begin{equation*}
    \performance{N^{-1}\ones}{\policy{*}{j}} \leq \performance{N^{-1}\ones}{\policy{\dec,*}{j}} \, \, \text{ for all } (\{\policy{*}{j}\}_{j=1}^{\Tenv}, \policy{*}{c}) \in \SS{\Cal}{}(\epsilon).
\end{equation*}
\end{proposition}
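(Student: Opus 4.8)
The first inequality is essentially free, so I would dispose of it immediately. By construction $\SS{\cen}{}$ collects the minimizers over all of $\polspace$ of $\policy{}{}\mapsto\sum_{i=1}^{\Tenv}\norm{\CBE^\top \Ocm{\policy{}{}}{i} - \CBE^\top \Ocm{\policy{}{E_i}}{i}}_1 = \Tenv\,\performance{N^{-1}\ones}{\policy{}{}}$, hence $\policy{\cen,*}{}$ is a global minimizer of $\performance{N^{-1}\ones}{\cdot}$. Since every individual policy $\policy{*}{i}$ lies in $\polspace$, the bound $\performance{N^{-1}\ones}{\policy{\cen,*}{}}\leq\performance{N^{-1}\ones}{\policy{*}{i}}$ follows at once.

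For the second claim my plan is to couple the upper semicontinuity of $\SS{\Cal}{}$ at $\centrality=0$ from Lemma~\ref{lem:usc} with the Lipschitz sensitivity of the performance map from Lemma~\ref{lem:pergap}. First I would pin down the limiting optimizer set: at $\centrality=0$ the centrality constraint~\eqref{eq:CrossLearning_obj_1-cons} forces $\policy{}{i}=\policy{}{c}$ for every $i$, so the objective reduces to the centralized one evaluated at $\policy{}{c}$, giving $\SS{\Cal}{}(0)=\setdef{(\{\bar\pi\}_{i=1}^{\Tenv},\bar\pi)}{\bar\pi\in\SS{\cen}{}}$. Consequently the $j$-th component of any point of $\SS{\Cal}{}(0)$ is a centralized optimizer and realizes the value $\performance{N^{-1}\ones}{\policy{\cen,*}{}}$.

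Next I would quantify. Let $\delta:=\performance{N^{-1}\ones}{\policy{\dec,*}{j}}-\performance{N^{-1}\ones}{\policy{\cen,*}{}}>0$ by hypothesis, and write $C:=\CBn L^{\PolOcm{}{}}\sqrt{\abs{\State}\abs{\Action}}$ for the constant of Lemma~\ref{lem:pergap}. Choosing a radius $r$ with $Cr<\delta$, I would take the neighborhood $\Neig{\SS{\Cal}{}(0)}$ to be the $r$-tube around $\SS{\Cal}{}(0)$ and invoke Lemma~\ref{lem:usc} to obtain a threshold $\centrality>0$ so that every optimizer $(\{\policy{*}{i}\}_{i=1}^{\Tenv},\policy{*}{c})\in\SS{\Cal}{}(\centrality)$ lies within distance $r$ of some $(\{\bar\pi\}_{i=1}^{\Tenv},\bar\pi)$ with $\bar\pi\in\SS{\cen}{}$, in particular $\norm{\policy{*}{j}-\bar\pi}_2\leq r$. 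Lemma~\ref{lem:pergap} then yields $\performance{N^{-1}\ones}{\policy{*}{j}}\leq\performance{N^{-1}\ones}{\bar\pi}+Cr=\performance{N^{-1}\ones}{\policy{\cen,*}{}}+Cr<\performance{N^{-1}\ones}{\policy{\dec,*}{j}}$, which is the claim and holds uniformly over all optimizers in $\SS{\Cal}{}(\centrality)$.

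The step I expect to be delicate is the translation of Lemma~\ref{lem:usc} into a quantitative component bound: that lemma only asserts containment of $\SS{\Cal}{}(\centrality)$ in an \emph{arbitrary} neighborhood of $\SS{\Cal}{}(0)$, so I must realize that neighborhood concretely as an $r$-tube (legitimate because $\polspace^{\Tenv+1}$ is finite-dimensional and all norms are equivalent) in order to read off $\norm{\policy{*}{j}-\bar\pi}_2\leq r$ for the single component of interest. Two further points need care: upper semicontinuity is available only at $\centrality=0$, which is precisely why the conclusion is a smallness threshold on $\centrality$ rather than a uniform-in-$\centrality$ statement; and the reference policy $\bar\pi$ may vary with the chosen optimizer, but this is harmless since every $\bar\pi\in\SS{\cen}{}$ attains the common value $\performance{N^{-1}\ones}{\policy{\cen,*}{}}$.
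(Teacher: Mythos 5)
Your proposal is correct and follows essentially the same route as the paper's proof: the first inequality from global optimality of the centralized policy over $\polspace$, and the second by combining the Lipschitz sensitivity bound of Lemma~\ref{lem:pergap} with the upper semicontinuity of $\SS{\Cal}{}$ at $\centrality=0$ from Lemma~\ref{lem:usc}, pulling a neighborhood of the limiting optimizer set back to small $\centrality$. The only (harmless) differences are cosmetic: you make the neighborhood quantitative (an $r$-tube with $Cr<\delta$) where the paper argues qualitatively via a neighborhood $\Neig{\SS{\cen}{}}$ on which the performance stays below $\performance{N^{-1}\ones}{\policy{\dec,*}{j}}$, and you identify $\SS{\Cal}{}(0)$ as the diagonal copy of $\SS{\cen}{}$ (which is the precise statement) where the paper writes $(\SS{\cen}{})^{N+1}$.
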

\begin{proof}
The first inequality trivially follows from the definition of $\policy{\cen,*}{}$. For the second inequality, from Lemma~\ref{lem:pergap}, the function $\performance{N^{-1}\ones}$ is Lipschitz continuous everywhere on the compact set $\polspace$. Thus, for any $i \in [\Tenv]$ where $\performance{N^{-1}\ones}{\policy{\cen,*}{}} < \performance{N^{-1}\ones}{\policy{\dec,*}{i}}$ holds, there exists a neighborhood $\Neig{\SS{\cen}{}}\subset \polspace$ of the set $\SS{\cen}{}$ such that
\begin{align}\label{eq:nscen}
    \performance{N^{-1}\ones}{\policy{}{}} \leq \performance{N^{-1}\ones}{\policy{\dec,*}{i}}, \quad \text{ for all } \policy{}{} \in \Neig{\SS{\cen}{}}.
\end{align}
Noting the fact that $\SS{\Cal}{}(0) = (\SS{\cen}{})^{N+1}$ and using Lemma~\ref{lem:usc}, we conclude that there exists $\bar{\centrality}>0$ such that for all $\centrality \in [0,\bar{\centrality})$ we have $\SS{\Cal}{}(\centrality) \subset (\Neig{\SS{\cen}{}})^{N+1}$. This inclusion along with the inequality~\eqref{eq:nscen} yields the conclusion.
\end{proof}
The two results presented in this section highlight the fact that the optimizers of the CAL framework balance the properties of the centralized and decoupled optimal policies. This balance is tunable using the centrality measure $\centrality$. Moreover, we believe that the framework also allows for agents to learn from each other's experts. We wish to explore these ideas in future. 

\section{Algorithms for solving CAL problem}
\label{sec:centralised}
In this section, we investigate both centralized and distributed approaches to approximate the solution of the CAL problem~\eqref{eq:CrossLearning_obj_2}. To this end, observe that the objective function in~\eqref{eq:CrossLearning_obj_2} is non-convex with respect to the policies but is convex with respect to the corresponding induced discounted occupation measures. Therefore, in line with the approach used in~\cite{AK-GB-JL:19}, we proceed to rewrite~\eqref{eq:CrossLearning_obj_2} in terms of the discounted occupation measure. This process results into a convex objective, but renders the constraints bilinear, as explained below. We handle the nonconvexity caused by such constraints by forming convex outer approximation of the feasibility set.

Recalling the set of feasible occupation measures given in~\eqref{eq:ocm_fes} and the bijection between policies and occupation measures, we rewrite~\eqref{eq:CrossLearning_obj_2} equivalently as 
\begin{subequations}\label{eq:CrossLearning_obj}
\begin{align}
  	\min_{\{\Ocm{}{i}\}_{i=1}^N,\policy{}{c}} & \quad \sum_{i=1}^{\Tenv} \norm{\CBE^\top\Ocm{}{i} -\CBE^\top\Ocm{\policy{}{E_i}}{i}}_1
	\\
	\text{subject to} & \quad \Ocm{}{i} \in \F{i}  \text{ for all } i \in [N],
	\\
	& \quad \policy{}{c} \in \polspace,
	\\
	& \quad \abs{ \frac{\Ocm{}{i}(\state,\action)}{\sum_{a' \in \Action} \Ocm{}{i}(\state,a')} - \policy{}{c} (\state,\action) } \leq \epsilon\nonumber \\
	&\qquad \quad \text{ for all } i \in [N], \state \in \State,\; \action \in \Action \label{cons:bilinear}.
\end{align}
\end{subequations}
The equivalence here refers to the fact that policies obtained from the optimal occupation measures of the above problem along with the cross-learned policy will be an optimizer of~\eqref{eq:CrossLearning_obj_2}. The constraint~\eqref{cons:bilinear} can be written as $\abs{ \Ocm{}{i}(\state,\action) - \policy{}{c} (\state,\action)\sum_{a' \in \Action} \Ocm{}{i}(\state,a')}  \leq \epsilon \sum_{a' \in \Action} \Ocm{}{i}(\state,a')$ and so it is bilinear in variables $\policy{}{c}$ and $\Ocm{}{i}$. Thus, the feasibility set of the above problem is nonconvex, in general. 
For computational ease, we use a set of linear inequality constraints to bound the nonconvex feasibility set that is formed by the bilinear constraint~\eqref{cons:bilinear}. To this end, we make use of McCormick envelopes~\cite{GPM:76, FA-JE:83}. Specifically, consider a bilinear constraint $\dummyz = \dummyx \dummyy$ for decision variables $\dummyx$, $\dummyy$, and $\dummyz$ where the former two are further constrained as $\dummyx_{l}\leq \dummyx \leq \dummyx_{u}$ and $\dummyy_{l}\leq \dummyy \leq \dummyy_{u}$. Then, the McCormick envelope for the set
\begin{align}\label{eq:th_mc_nconvex}
    \setdef{(x,y,z) \! \in \! \R{3}}{z \! = \! xy, \dummyx_{l}\leq \dummyx \leq \dummyx_{u}, \dummyy_{l}\leq \dummyy \leq \dummyy_{u}}
\end{align}
is the set consisting of four linear inequalities in place of the bilinear equality: 
\begin{align}
\label{eq:th_mc}
\left\{(x,y,z) \in \R{3}{} \middle\vert \begin{array} {l}
\dummyz \geq \dummyx_{l}y+\dummyx \dummyy_{l} - \dummyx_{l} \dummyy_{l},\\
\dummyz \geq \dummyx_{u}y+\dummyx \dummyy_{u} - \dummyx_{u} \dummyy_{u},\\
\dummyz \leq \dummyx_{u}y+\dummyx \dummyy_{l} - \dummyx_{u} \dummyy_{l},\\
\dummyz \leq \dummyx_{l}y+\dummyx \dummyy_{u} - \dummyx_{l} \dummyy_{u},\\
\dummyx_{l}\leq \dummyx \leq \dummyx_{u}, \dummyy_{l}\leq \dummyy \leq\dummyy_{u}.\end{array}\right\}
\end{align}
In the definition of the above set, the first two inequalities are the so called underestimating convex functions, and the next two are overestimating concave functions. The set defined in~\eqref{eq:th_mc_nconvex} is a subset of that in~\eqref{eq:th_mc}. In the following, we make use of this procedure to form an outer approximation of~\eqref{cons:bilinear}. 

Let $\{\sOcm{}{i},\bipar{}{i}\}_{i \in \until{\Tenv}}$ be the new set of decision variables where $\sOcm{}{i} \in \R{\abs{\State}}{}$ and $\bipar{}{i} \in \R{\abs{\State}\abs{\Action}}{}$. The variable $\sOcm{}{i}(\state)$ will take the value of  $\sum_{\action\in \Action} \Ocm{}{i}(\state,\action)$ and the  variable $\bipar{}{i}(\state,\action)$ will play the role of $\policy{}{c}(\state,\action) \sum_{\action\in \Action} \Ocm{}{i}(\state,\action)$. Then, the bilinear constraint $\bipar{}{i}(\state,\action) = \policy{}{c}(\state,\action) \sum_{\action\in \Action} \Ocm{}{i}(\state,\action)$ will be replaced with four linear inequalities, similar to way explained above. With these additional decision variables, we define the following McCormick relaxation of~\eqref{eq:CrossLearning_obj} as
\begin{subequations}\label{eq:CrossLearning_McCormick}
\begin{flalign}
	\min_{\substack{\{\Ocm{}{i}\}_{i=1}^\Tenv, \policy{}{c}, \\ \{\bipar{}{i}\}_{i=1}^\Tenv, \sOcm{}{i}}} \quad& \sum_{i=1}^{\Tenv} \norm{\CBE^\top \Ocm{}{i} - \CBE^\top \Ocm{\policy{}{E_i}}{i}}_1&&	\\
	\subjectto\quad& \Ocm{}{i}\in\F{i}, \, \, \forall \, i\in\until{\Tenv},&&
	\\
	& \policy{}{c}\in\polspace, \{\bipar{}{i}\}_{i=1}^\Tenv\in\realnonnegative^{\abs{\State}\abs{\Action}}, \, \, \forall \, i\in\until{\Tenv},&&
	\\
	& \{\bipar{}{i}\}_{i=1}^\Tenv\in\realnonnegative^{\abs{\State}\abs{\Action}}, \, \, \forall \, i\in\until{\Tenv},&& \\
	&\sOcm{}{i}(\state)=\sum_{\action\in \Action} \Ocm{}{i}(\state,\action), \, \, \forall \, i\in\until{\Tenv},&&
	\\
	&\text{for all }i \in \until{\Tenv},\; \state\in \State,\;\action\in\Action:&&\nonumber
	\\
	&\, \, \, \, \,\abs{\Ocm{}{i}(\state,\action)-\bipar{}{i}(\state, \action)}\leq \epsilon\sOcm{}{i}(s),&&
	\\
	&\, \, \, \, \, \bipar{}{i}(\state, \action) \geq \Id(\state)\policy{}{c}(\state,\action),\label{mc:c1}&&
	\\
	&\, \, \, \, \, \bipar{}{i}(\state, \action) \geq \sOcm{}{i}(\state) \! + \!  \frac{\abs{\Action}}{1-\dis} (\policy{}{c}(\state,\action) \! - \! 1),\label{mc:c2}&&
	\\
	&\, \, \, \, \, \bipar{}{i}(\state, \action) \leq \sOcm{}{i}(\state) \! + \! \Id(\state)(\policy{}{c}(\state,\action) \! - \! 1), \label{mc:c3}&& 
	\\ 
	&\, \, \, \, \, \bipar{}{i}(\state, \action) \leq \frac{\abs{\Action}}{1-\dis} \policy{}{c}(\state,\action).\label{mc:c4}&&
\end{flalign}
\end{subequations}
Here constraints~\eqref{mc:c1}-\eqref{mc:c4} are obtained using the under-estimators and over-estimators for $\bipar{}{i}(\state, \action) = \policy{}{c}(\state,\action) \sOcm{}{i}(\state)$ along with the bounds $\Id(\state) \leq \sOcm{}{i}(\state) \leq \frac{\abs{\Action}}{1-\dis}$ and $0 \le \policy{}{c}(\state,\action) \leq 1$ for the occupation measure and policy, respectively. The next result summarizes the guarantee of the above approximation.
\begin{proposition}\longthmtitle{Solutions of~\eqref{eq:CrossLearning_McCormick} as approximation of those of~\eqref{eq:CrossLearning_obj_2}}
\label{prop:mc_conv}
If $(\{\policy{}{i}\}_{i=1}^{\Tenv}, \policy{}{c})$ is a feasible point of the CAL problem~\eqref{eq:CrossLearning_obj_2}, then there exist $\{\sOcm{}{i},\bipar{}{i}\}_{i \in \until{\Tenv}}$ such that these variables along with $(\{\Ocm{\policy{}{i}}{i}\}_{i=1}^{\Tenv}, \policy{}{c})$ together are feasible for~\eqref{eq:CrossLearning_McCormick}. Consequently, the optimal value of~\eqref{eq:CrossLearning_McCormick} is a lower bound for the optimal value of~\eqref{eq:CrossLearning_obj_2}. 
\end{proposition}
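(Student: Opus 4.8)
The plan is to exhibit an explicit point that is feasible for the relaxation~\eqref{eq:CrossLearning_McCormick}, built directly from the given CAL-feasible tuple, and then to obtain the lower-bound claim by a standard relaxation argument. Given a feasible $(\{\policy{}{i}\}_{i=1}^{\Tenv}, \policy{}{c})$ of~\eqref{eq:CrossLearning_obj_2}, I would take $\Ocm{}{i} := \Ocm{\policy{}{i}}{i}$, the occupation measure induced by $\policy{}{i}$ in environment $i$, set $\sOcm{}{i}(\state) := \sum_{\action\in\Action}\Ocm{}{i}(\state,\action)$, and---crucially---choose the auxiliary variable to be the \emph{exact} bilinear product $\bipar{}{i}(\state,\action) := \policy{}{c}(\state,\action)\,\sOcm{}{i}(\state)$. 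With these choices the constraint $\sOcm{}{i}(\state)=\sum_{\action\in\Action}\Ocm{}{i}(\state,\action)$ holds by definition, $\Ocm{}{i}\in\F{i}$ holds because induced occupation measures lie in $\F{i}$, $\policy{}{c}\in\polspace$ is inherited from the original point, and $\bipar{}{i}(\state,\action)\ge 0$ since it is a product of two nonnegative quantities.

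The only constraint for which feasibility of the original CAL point is genuinely used is the absolute-value bound. Here I would invoke the policy--occupation-measure correspondence (cf.~\eqref{eq:OcmPol}), which gives $\Ocm{}{i}(\state,\action)=\policy{}{i}(\state,\action)\,\sOcm{}{i}(\state)$, so that $\abs{\Ocm{}{i}(\state,\action)-\bipar{}{i}(\state,\action)} = \sOcm{}{i}(\state)\,\abs{\policy{}{i}(\state,\action)-\policy{}{c}(\state,\action)} \le \centrality\,\sOcm{}{i}(\state)$, where the last step is exactly the centrality constraint $\norm{\policy{}{i}-\policy{}{c}}_\infty\le\centrality$ from~\eqref{eq:CrossLearning_obj_1-cons} together with $\sOcm{}{i}(\state)\ge0$.

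For the four McCormick inequalities~\eqref{mc:c1}--\eqref{mc:c4}, the point of the exact-product choice is that any point satisfying $\bipar{}{i}=\policy{}{c}\,\sOcm{}{i}$ automatically lies inside its own McCormick envelope, provided the governing variable bounds hold. Those bounds are $0\le\policy{}{c}(\state,\action)\le1$ (since $\policy{}{c}(\state)\in\simplex{\abs{\Action}}$) and $\Id(\state)\le\sOcm{}{i}(\state)\le\frac{\abs{\Action}}{1-\dis}$, the latter being precisely Lemma~\ref{lem:ocm_sum}; these are exactly the bounds from which~\eqref{mc:c1}--\eqref{mc:c4} were generated. Substituting $\bipar{}{i}=\policy{}{c}\,\sOcm{}{i}$ reduces each of the two under-estimators to a sign condition of the form $(\policy{}{c}-x_\ell)(\sOcm{}{i}-y_\ell)\ge0$ or $(x_u-\policy{}{c})(y_u-\sOcm{}{i})\ge0$, and each of the two over-estimators to a mixed condition such as $(x_u-\policy{}{c})(\sOcm{}{i}-y_\ell)\ge0$, with the generic bounds $x_\ell=0$, $x_u=1$, $y_\ell=\Id(\state)$, $y_u=\frac{\abs{\Action}}{1-\dis}$; every one of these holds because, under the stated bounds, the two factors have determined signs.

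Finally, the objective of~\eqref{eq:CrossLearning_McCormick} depends only on $\{\Ocm{}{i}\}$, and at the constructed point it equals $\sum_{i=1}^{\Tenv}\norm{\CBE^\top\Ocm{\policy{}{i}}{i}-\CBE^\top\Ocm{\policy{}{E_i}}{i}}_1$, i.e.\ the CAL objective at $(\{\policy{}{i}\}_{i=1}^{\Tenv},\policy{}{c})$. Thus every CAL-feasible point induces a feasible point of~\eqref{eq:CrossLearning_McCormick} with the same objective value, so the infimum over the larger, relaxed feasible set cannot exceed the infimum over the CAL feasible set, which yields the claimed lower bound. I do not anticipate a real obstacle: the proof is essentially bookkeeping, and the only points requiring care are recording the correct variable bounds from Lemma~\ref{lem:ocm_sum} and using the exact-product definition of $\bipar{}{i}$ so that envelope membership is immediate rather than something to be re-derived.
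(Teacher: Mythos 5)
Your proof is correct and is exactly the argument the paper leaves implicit: Proposition~\ref{prop:mc_conv} is stated without proof, being regarded as immediate from the construction of the relaxation. Your bookkeeping---choosing the exact bilinear product for $\bipar{}{i}$, deriving the absolute-value constraint from the centrality constraint~\eqref{eq:CrossLearning_obj_1-cons} via the factorization $\Ocm{\policy{}{i}}{i}(\state,\action)=\policy{}{i}(\state,\action)\,\sOcm{}{i}(\state)$, checking the McCormick inequalities~\eqref{mc:c1}--\eqref{mc:c4} as sign conditions under the bounds of Lemma~\ref{lem:ocm_sum}, and concluding the lower bound from objective-value preservation---is precisely what a written-out version of the paper's reasoning would contain, with no gaps.
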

Note that if $(\{\Ocm{*}{i}\}_{i=1}^{\Tenv},\policy{*}{c})$ is part of the optimizers of~\eqref{eq:CrossLearning_McCormick}, then the obtained policies from these measures might not be feasible for the CAL problem~\eqref{eq:CrossLearning_obj_2}. To obtain feasible policies, one can resort to one of the following two strategies:
\begin{enumerate}
    \item Project all the policies $\{\policy{}{\Ocm{*}{i}}\}_{i=1}^{\Tenv}$ obtained from~\eqref{eq:CrossLearning_McCormick} onto an $\centrality-$ball (under the $\inf$-norm) with its centre as the cross-learned policy $ \policy{*}{c}$.
    \item Project all the individual policies $\{\policy{}{\Ocm{*}{i}}\}_{i=1}^{\Tenv}$ onto an $\centrality$-ball centred at the average policy $\frac{1}{\Tenv}\sum_{i \in \until{\Tenv}} \policy{}{\Ocm{*}{i}} $. 
\end{enumerate}
The former gives more importance to $\policy{*}{c}$, while the later perceives that the obtained individual policies $\policy{}{\Ocm{*}{i}}$ perform well and so their average is selected as an estimate of the cross-learned policy.
\begin{remark}
\longthmtitle{An inner approximation approach}
\label{rm:conv_cons}
{\rm 
The McCormick relaxation described above forms an outer convex approximation of the feasibility set. One can also form an inner convex approximation by using the bound given in Lemma~\ref{lem:map_prop3}. Specifically, given any environment $i \in \until{\Tenv}$ and two occupation measures $\Ocm{}{1},\Ocm{}{2}\in\F{i}$, we have 
\begin{equation}
\label{eq:lem_cc1}
\big\| \policy{}{\Ocm{}{1}}-\policy{}{\Ocm{}{2}} \bigr\|_2 \leq \frac{2}{\Id^{\min}} \norm{\Ocm{}{1}-\Ocm{}{2}}_1,
\end{equation}
where $\Id^{\min} := \min_{\state \in \State} \Id(\state)$ and $\policy{}{\Ocm{}{1}}$ and $\policy{}{\Ocm{}{2}}$ are policies corresponding to measures $\Ocm{}{1}$ and $\Ocm{}{2}$, respectively. This bound was obtained in \cite[Proposition 1]{JZ-AK-ASB-CS-MW:20} and a closer look at the proof in there reveals that occupation measures need not be restricted to $\F{i}$ for the bound to hold. In fact, if two measures $\Ocm{}{1},\Ocm{}{2}$ belong to the set $\JJ$, where
\begin{flalign*}
	\JJ := \setdefbig{\Ocm{}{} \in \realnonnegative^{\abs{\State}\abs{\Action}}}{\Id^{\min} \le \sum_{\action' \in \Action} \Ocm{}{}(\state,\action')  \leq& \frac{\abs{\Action}}{1-\dis} 
	&&\\
	& \text{ for all } \state \in \State},&&
\end{flalign*}
then the bound~\eqref{eq:lem_cc1} is satisfied. Further, for any vector $\dummy \in \R{n}{}$, we have $\norm{\dummy}_\infty \le \norm{\dummy}_2$ and $\norm{\dummy}_1 \le n \norm{\dummy}_\infty$. Using these bounds in~\eqref{eq:lem_cc1}, we obtain
\begin{align*}
   \bigl\| \policy{}{\Ocm{}{1}}-\policy{}{\Ocm{}{2}} \bigr\|_{\infty} \leq \frac{2\abs{\State}\abs{\Action}}{\Id^{\min}} \norm{\Ocm{}{1}-\Ocm{}{2}}_{\infty},
\end{align*}
for all $\Ocm{}{1},\Ocm{}{2} \in \JJ$. Note that $\F{i} \subset \JJ$ for all $i \in \until{\Tenv}$. Using these facts, the convex inner approximation of~\eqref{eq:CrossLearning_obj} is 
\begin{subequations}\label{eq:CrossLearning_relaxed}
	\begin{flalign}
		\min _{\{\Ocm{}{i}\}_{i=1}^\Tenv, \Ocm{}{c}} & \quad  \sum_{i=1}^{\Tenv} \norm{\CBE^\top \Ocm{}{i} - \CBE^\top \Ocm{\policy{}{E_i}}{}}_1&&
		\\
		\text{subject to} & \quad \Ocm{}{i} \in \F{i}, \, \forall i \in \until{\Tenv},&&
		\\
		& \quad \Ocm{}{c} \in \JJ,&&
		\\
		& \quad \norm{\Ocm{}{i}-\Ocm{}{c}}_{\infty}\leq \frac{\Id^{\min}}{2\abs{\State}\abs{\Action}}\centrality, \, \forall i \in \until{\Tenv}.\label{eq:CLR_cons}&&
	\end{flalign}
\end{subequations}
Once an optimizer $(\{\Ocm{*}{i}\}_{i=1}^\Tenv, \Ocm{*}{c})$ of the above problem is obtained, then the individual policies are $\{\policy{}{\Ocm{*}{i}}\}_{i=1}^\Tenv$ and the cross-learned policy is $\policy{}{\Ocm{*}{c}}$. As the size of state and action spaces appear in the denominator of the constraint~\eqref{eq:CLR_cons}, this approximation is very conservative and often leads to infeasibility for large state and action spaces.
}
\oprocend
\end{remark}
\begin{remark}
\longthmtitle{Distributed computation}
{\rm For applications in the real world, we can envision the scenario where information or behavior of the expert is not available at one particular geographical location. For example, two individuals can be driving a two different vehicles in two different geographical locations. In such a case, it is desirable to solve the CAL problem or its convex approximations in a distributed manner. By this we mean that the data about the expert behavior and the model of the environment remains as local information for an agent and is not shared with other agents. Under this information constraint, the convex approximations~\eqref{eq:CrossLearning_McCormick} and~\eqref{eq:CrossLearning_relaxed} both have structures that allow easy implementation of distributed algorithm. They both have objective functions as the summation of local functions and constraints that are local once a consensus constraint is added. For this case, either one can opt for primal-dual distributed algorithms or distributed alternating direction method of multiplier, see~\cite{GN-IN-AC:20} for complete details. However, solving the bilinear problem~\eqref{eq:CrossLearning_obj} in a distributed manner is unexplored in the literature and we plan to pursue it in future.
}
\oprocend
\end{remark}

\section{Simulations}
\label{sec:simulation}

Here we illustrate the properties of the proposed CAL framework using a navigation task in a windy gridworld. Such an environment is often used to demonstrate the efficacy of reinforcement learning algorithms~\cite{RSS-AGB:18}. We consider four gridworlds, each of which consists of  $7\times10$ cells (similar to \cite[Example 6.5]{RSS-AGB:18}), as depicted in Figure~\ref{fig:world}. These four instances differ in the magnitude of the crosswind that is flowing from bottom to top. Each cell in the gridworld is a state of the environment. An agent in the gridworld aims to reach the target cell by taking at each time instance one of the four available actions, i.e., move left, right, up, or down. When the magnitude of the wind at a particular cell is zero, then the action causes intended movement by one unit as long as it respects the boundary. For instance, action up results in moving of the agent by one unit in the upward direction. In case the wind has non-zero magnitude, then the displacement equivalent to the magnitude and along the direction of the wind is added to the displacement caused due to the action of the agent. For example, if the agent opts for moving right and the wind has unit magnitude, then the agent move to the top-right adjacent cell. This specifies completely the transition probability attached to an environment given the wind direction and magnitude at each cell. Roughly speaking, the aim for the agent is to reach the target cell $(3,7)$, see Figure~\ref{fig:world}, from any cell in the gridworld using minimum number of steps.
\begin{figure}[ht]
\centering
\includegraphics[width=8.5cm]{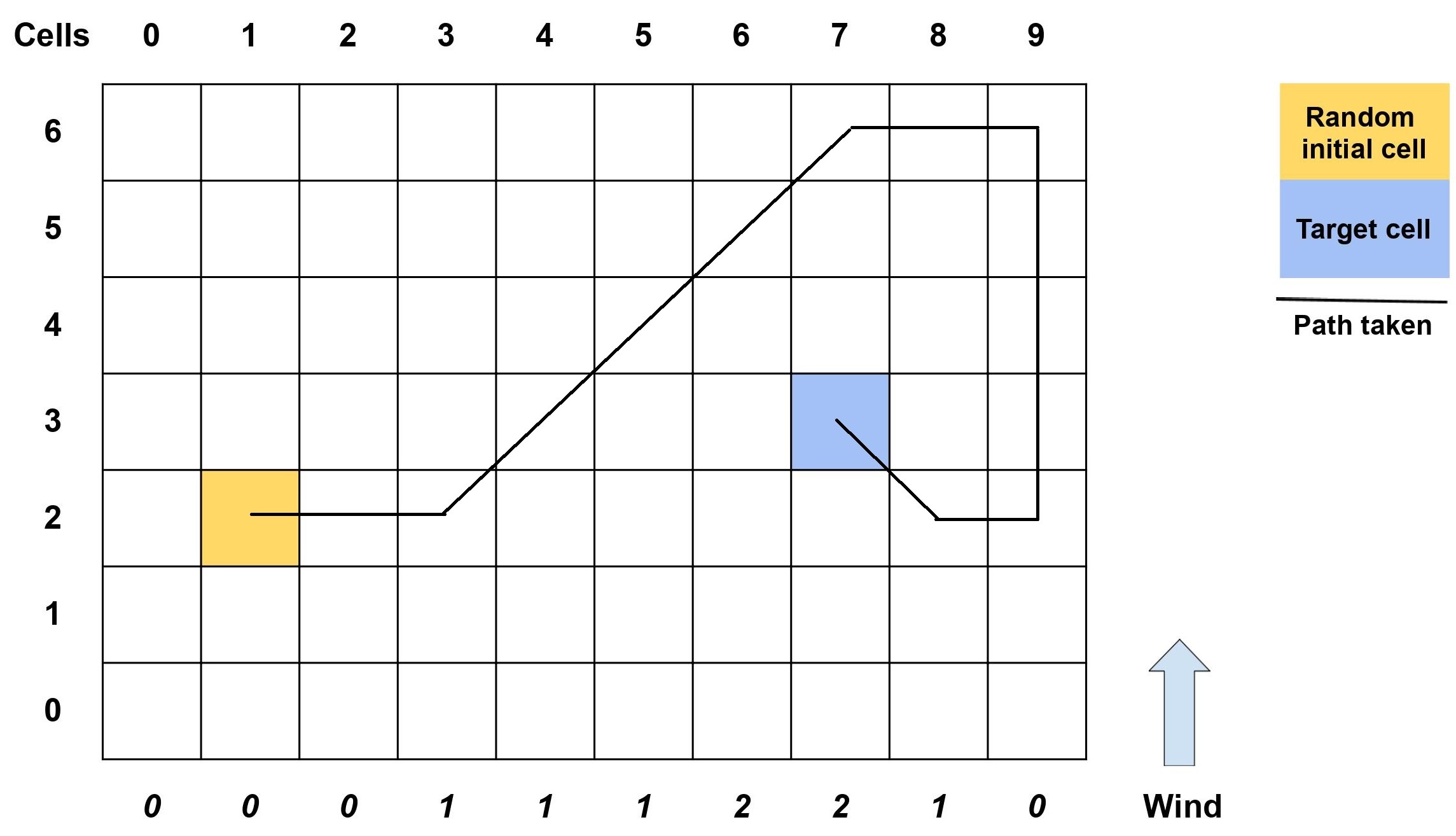}
\caption{An instance of the windy gridworld and a sample trajectory of an agent in it. The yellow and the blue cells are the initial and target cells. The numbers at the bottom of each column stand for the magnitude of wind flowing in the upward direction in all cells belonging to that column.}
\label{fig:world}
\end{figure}
\begin{table*}[ht]
\caption{Number of trajectories out of 200 for which the policies obtained using $\centrality=1$, $\centrality=0.6$, $\centrality=0.2$ and $\centrality=0$ reached the target cell within a maximum of 20 timesteps.}
\centering
\resizebox{2\columnwidth}{!}{\begin{tabular}{c c c c c|c c c c c}
\toprule
Policy & World 1 & World 2 & World 3 & World 4 & Policy & World 1 & World 2 & World 3 & World 4\\
\hline
\hline
 &  & $\centrality=1$& & & & & $\centrality=0.6$ & &\\
\hline
Individual policy 1 & 199 & 186 & 165 & 187  & Individual policy 1 & 197 & 188 & 166 & 198\\
Individual policy 2 & 6 & 198 & 16 & 191 & Individual policy 2 & 13 & 192 & 21 & 166\\
Individual policy 3 & 5 & 2 & 182 & 2 & Individual policy 4 & 25 & 198 & 26 & 200\\
Individual policy 4 & 7 & 196 & 13 & 198 & Individual policy 3 & 31 & 63 & 29 & 43\\
Cross learned policy  & 37 & 160 & 54 & 152 & Cross-learned policy & 45 & 158 & 76 & 127 \\
\hline
&  & $\centrality=0.2$& & & & & $\centrality=0$ & &\\
\hline
Individual policy 1 & 121 & 177 & 123 & 172 &Individual policy 1 & 28 & 57 & 26 & 53\\
Individual policy 2 & 115 & 179 & 123 & 166 & Individual policy 2 & 24 & 58 & 35 & 54\\
Individual policy 3 & 121 & 168 & 131 & 165 & Individual policy 3 & 31 & 63 & 29 & 43\\
Individual policy 4 & 128 & 175 & 122 & 181 & Individual policy 4 & 31 & 61 & 33 & 453\\
Cross learned policy & 123 & 172 & 125 & 156 & Cross-learned policy  & 23 & 58 & 20 & 56\\
\bottomrule
\end{tabular}}
\label{tab:Mc_relax}
\end{table*}

The direction of the wind for all environments and all cells is down to up. For each environment, the magnitude of the wind is same for all cells in one column, refer to Figure~\ref{fig:world}, and so the magnitude for the whole environment is specified by a vector. The wind vectors for four environments are:    
\begin{align*}
    \text{Gridworld 1}&:\;[0\;0\;0\;1\;1\;1\;2\;2\;1\;0]\\
    \text{Gridworld 2}&:\;[1\;1\;0\;0\;0\;2\;0\;0\;1\;0]\\
    \text{Gridworld 3}&:\;[0\;1\;0\;1\;2\;0\;1\;1\;1\;0]\\
    \text{Gridworld 4}&:\;[0\;0\;1\;1\;2\;2\;0\;0\;1\;0]
\end{align*}
As one can observe, there is a commonality to the task specified for each environment, while the transition probabilities are different. To obtain the behavior of the expert specified by the occupation measure generated by the expert, we first obtain expert policies in each environment using $\centrality$-greedy SARSA algorithm, see (\cite[Example 6.5]{RSS-AGB:18}) for further details. Given the expert policies, we compute the discounted occupation measure generated by them using $200$ sample trajectories, each starting randomly at a location in the gridworld and consisting of $100$ time-steps. For cost basis, we assume the simple case of $\abs{\State}\abs{\Action}$ number of vectors given by $\CBe_i = e_i$ for all $i\in\until{\abs{\State}\abs{\Action}}$, where $e_i\in\R{\abs{\State}\abs{\Action}}{}$ has $1$ at the $i^{\text{th}}$ position and all other entries as $0$. This completely specifies the CAL optimization problem that we aim to solve.  We consider four values for the centrality measure, namely $\centrality=1$, $\centrality=0.6$, $\centrality=0.2$ and $\centrality=0$. We employ the McCormick envelope based outer approximation to find an approximate optimizer of the CAL problem. Since the obtained policies might not satisfy the closeness condition~\eqref{eq:CrossLearning_obj_1-cons}, we use the second strategy explained in the discussion following Proposition~\ref{prop:mc_conv} to obtain feasible cross-learned and individual policies.

Table~\ref{tab:Mc_relax}, shows the performance of the obtained policies. For each obtained policy, we compute the number of the times out of randomly generated $200$ trajectories the agent reaches the target state in at most $20$ steps. Higher this number, better is the ability of the agent to steer to the target. One can note that when $\centrality$ is big, the individual policies are close to optimum in their respective environments and their performance in other gridworlds is not necessarily good, see e.g., Individual policy $3$ for $\centrality=1$. On the other extreme is the case of $\centrality = 0$. Here, all policies perform almost similarly across environments. Note that they are not exactly same as the obtained policies are stochastic and we only show success rate for finite number of trajectories. Our presented CAL framework balances both these extreme cases when $\centrality$ is chosen to be between $0$ and $1$. It can be seen that there is a general trend of increase in the number of successes individual policies have in other environments when we move from $\centrality=1$ to $\centrality = 0$. It is surprising that with $\centrality = 0.2$ we obtain policies that outperform policies obtained with $\centrality = 0$ across all environments. This is possibly also due to the fact that our method only finds approximate optimizers. 

\section{Conclusion}
\label{sec:conc}

We have introduced the cross apprenticeship learning (CAL) framework for apprenticeship learning when the expert trajectories of the task to be learned are available from multiple environments. We presented various properties of the optimizers of the problem that stands at the core of our framework. Further, since the problem is nonconvex, we provided a convex approximation approach to solve it. Our findings were implemented in a numerical example related to navigation in a windy gridword. Future work will explore distributed algorithms for bilinear optimization problems with tunable accuracy so as to solve the CAL problem for a large number of environments. We also wish to study agents' ability to learn from experts in other environments when the number of expert trajectories available is quite different in various environments. Lastly, we would like to explore the scalability of our approach to large-scale state-action spaces.

\bibliographystyle{plain}
\bibliography{refs.bib}
\end{document}